\theoremstyle{plain}
\newtheorem{thm}{Theorem}[section]
\newtheorem{lem}[thm]{Lemma}
\newtheorem{prop}[thm]{Proposition}
\newtheorem{coro}[thm]{Corollary}
\newtheorem{conj}[thm]{Conjecture}
\theoremstyle{definition}
\newtheorem{rem}[thm]{Remark}
\newtheorem{ex}[thm]{Example}
\renewcommand{\@secnumfont}{\bfseries}
\renewcommand\section{\@startsection{section}{1}%
\z@{.7\linespacing\@plus\linespacing}{.5\linespacing}%
{\large\bfseries\scshape\centering}}
\renewcommand\subsection{\@startsection{subsection}{2}%
  \z@{.5\linespacing\@plus.7\linespacing}{-.5em}%
  {\bfseries\scshape}}
\renewcommand\subsubsection{\@startsection{subsubsection}{3}%
  \z@{.5\linespacing\@plus.7\linespacing}{-.5em}%
  {\scshape}}
\newcommand*{\myfnsymbolsingle}[1]{%
  \ensuremath{%
    \ifcase#1
    \or 
      \dagger
    \else 
      \@ctrerr  
    \fi
  }%
}   
\newalphalph{\myfnsymbolmult}[mult]{\myfnsymbolsingle}{}
\DeclarePairedDelimiter\abs{\lvert}{\rvert} 
\let\oldabs\abs
\def\abs{\@ifstar{\oldabs}{\oldabs*}}
\newcommand{\firstmention}{\emph}
\newcommand{\N}{\mathbb{N}}
\newcommand{\Z}{\mathbb{Z}}
\newcommand{\R}{\mathbb{R}}
\newcommand{\C}{\mathbb{C}}
\newcommand{\sym}{S}
\newcommand{\freevecspace}{\mathbb{C}_{\mathbf{x}}^{n-1}S}
\newcommand{\syt}{\textnormal{SYT}}
\newcommand{\staircase}{\delta}
\newcommand{\young}{\mathcal{Y}}
\newcommand{\sort}{\textnormal{SN}}
\newcommand{\eg}{\textnormal{EG}}
\newcommand{\diag}{\textnormal{cor}}
\newcommand{\diagsorted}{\overline{\diag}}
\newcommand{\rev}{\textnormal{rev}}
\newcommand{\id}{\textnormal{id}}
\newcommand{\fin}{\textnormal{last}}
\newcommand{\finsorted}{\overline{\fin}}
\newcommand{\tab}{\textnormal{Tab}}
\newcommand{\interlacingtab}{\textnormal{IntTab}}
\newcommand{\Znonneg}{\mathbb{Z}_{\geq 0}}
\renewcommand{\i}{\mathrm{i}}
\newcommand{\Usorted}{\overline{U}}
\newcommand{\bmUsorted}{\overline{\bm{U}}}
\renewcommand{\P}{\mathbb{P}}
\newcommand{\diff}{\mathop{}\!\mathrm{d}}
\newcommand{\1}{\mathbbm{1}} 
\DeclareMathOperator{\e}{\mathrm{e}}
\newcommand{\RSK}{\mathsf{RSK}}
\newcommand{\Burge}{\mathsf{Bur}}
\newcommand{\Fro}{\mathcal{B}}
\renewcommand{\emptyset}{\varnothing}
\DeclareMathSymbol{\widehatsym}{\mathord}{largesymbols}{"62}
\renewcommand{\hat}{\widehat}
\begin{document}

\title[The oriented swap process and last passage percolation]{The oriented swap process \\ and last passage percolation}

\keywords{Reduced word decomposition, Staircase Young tableau, Last passage percolation, Sorting network, Robinson-Schensted-Knuth correspondence, Burge correspondence, Edelman-Greene correspondence, Tracy-Widom distribution}

\author[E.~Bisi]{Elia Bisi}
\address{Technische Universit\"at Wien \\
Institut f\"ur Stochastik und Wirtschaftsmathematik \\
E 105-07 \\
Wiedner Hauptstra{\ss}e 8-10 \\
1040 Wien, Austria}
\email{elia.bisi@tuwien.ac.at}

\author[F.~D.~Cunden]{Fabio Deelan Cunden}
\address{Dipartimento di Matematica\\
 Universit\`a degli Studi di Bari\\ 
 I-70125 Bari\\
  Italy}
\email{fabio.cunden@uniba.it}

\author[S.~Gibbons]{Shane Gibbons}
\address{School of Mathematics and Statistics\\
University College Dublin\\
Dublin 4, Ireland}
\email{shane.gibbons@ucdconnect.ie}

\author[D.~Romik]{Dan Romik}
\address{Department of Mathematics \\ University of California, Davis \\ One Shields Ave
\\ Davis, CA 95616 \\ USA}
\email{romik@math.ucdavis.edu}

\maketitle

\begin{abstract}
We present new probabilistic and combinatorial identities relating three random processes: the oriented swap process on $n$ particles, the corner growth process, and the last passage percolation model. We prove one of the probabilistic identities, relating a random vector of last passage percolation times to its dual, using the duality between the Robinson--Schensted--Knuth and Burge correspondences. A second probabilistic identity, relating those two vectors to a vector of `last swap times' in the oriented swap process, is conjectural. We give a computer-assisted proof of this identity for $n\le 6$ after first reformulating it as a purely combinatorial identity, and discuss its relation to the Edelman--Greene correspondence. The conjectural identity provides precise finite-$n$ and asymptotic predictions on the distribution of the absorbing time of the oriented swap process, thus conditionally solving an open problem posed by Angel, Holroyd and Romik.
\end{abstract}

\section{Introduction}
\label{sec:intro}

Randomly growing Young diagrams, and the related models known as \firstmention{Last Passage Percolation} (LPP) and the \firstmention{Totally Asymmetric Simple Exclusion Process} (TASEP), are intensively studied stochastic processes.
Their analysis has revealed many rich connections to the combinatorics of Young tableaux, longest increasing subsequences, the Robinson--Schensted--Knuth (RSK) algorithm, and related topics --- see for example~\cite[Chs.~4-5]{romik15}.

\firstmention{Random sorting networks} are another family of random processes.
Two main models, the \firstmention{Uniform Random Sorting Network} and the \firstmention{Oriented Swap Process} (OSP), have been analyzed~\cite{angelDauvergneEtAl17, angelHolroydRomikVirag07, angelHolroydRomik09, dauvergne18, dauvergneVirag18} and are known to have connections to the TASEP, last passage percolation, and also to staircase shape Young tableaux via the \firstmention{Edelman--Greene bijection}~\cite{edelmanGreene87}.

In this article we discuss a new and surprising meeting point between the aforementioned subjects.
In an attempt to address an open problem from~\cite{angelHolroydRomik09} concerning the absorbing time of the OSP, we discovered elegant distributional identities relating the oriented swap process to last passage percolation, and last passage percolation to itself.
We will prove one of the two main identities; the other one is a conjecture that we have been able to verify for small values of a parameter~$n$.
The analysis relies in a natural way on well-known notions of algebraic combinatorics, namely the RSK, Burge, and Edelman--Greene correspondences.

Our conjectured identity apparently requires new combinatorics to be explained, and has far-reaching consequences for the asymptotic behavior of the OSP as the number of particles grows to infinity, as will be explained in Subsection~\ref{subsec:absTimes}.

Most of the results in this paper were obtained in 2019 and announced in the proceedings of the 32nd Conference on Formal Power Series and Algebraic Combinatorics~\cite{bisiEtAl20}.
The present paper contains complete proofs, as well as additional material including:
\begin{itemize}
\item more detailed information about the RSK and Burge correspondences for random tableaux and their connection to distributional symmetries in last passage percolation;
\item some explicit formulas related to the conjectural identity and its connection to the largest eigenvalue of certain random matrices and Tracy-Widom distributions;
\item more details about the Edelman-Greene correspondence and its relation to the conjectural identity.
\end{itemize}

\subsection{Models}
\label{subsec:models}

The two main identities presented in this paper take the form 
\[
\bm{U}_n \overset{D}{=} \bm{V}_n \overset{D}{=}
\bm{W}_n \, ,
\]
where $ \overset{D}{=}$ denotes equality in distribution, and $\bm{U}_n$, $\bm{V}_n$, $\bm{W}_n$ are $(n-1)$-dimensional random vectors associated with the following three random processes.

\subsubsection*{The oriented swap process}

This process~\cite{angelHolroydRomik09} describes randomly sorting a list of~$n$~particles labelled $1,\ldots,n$.
At time $t=0$, particle labelled~$j$ is in position~$j$ on the finite integer lattice $[1,n]=\{1,\ldots,n\}$.
All pairs of adjacent positions $k,k+1$ of the lattice are assigned independent Poisson clocks.
The system then evolves according to the random dynamics whereby each pair of particles with labels $i,j$ occupying respective positions $k$, $k+1$ attempt to swap when the corresponding Poisson clock rings; the swap succeeds only if $i<j$, i.e.,\ if the swap increases the number of inversions in the sequence of particle labels.
The oriented swap process can also be interpreted as a continuous-time random walk on the Cayley graph of $S_n$ with adjacent swaps as generators (considered as a directed graph).
See Fig.~\ref{subfig:caleyGraph}.

We define the vector $\bm{U}_n = (U_n(1), \ldots, U_n(n-1))$ of \firstmention{last swap times} by
\begin{align*}
U_n(k) &:= \text{the last time $t$ at which a swap occurs between positions $k$ and $k+1$}.
\end{align*}
As explained in~\cite{angelHolroydRomik09}, the last swap times are related to the \firstmention{particle finishing times}: it is easy to see that $\max\{U_n(n-k), U_n(n-k+1)\}$ is the finishing time of particle $k$ (with the convention that $U_n(0)=U_n(n)=0$); see the equation on the last line of page 1988 of \cite{angelHolroydRomik09}.

\begin{figure}
\centering
\begin{subfigure}[b]{.5\linewidth}
\captionsetup{width=.9\textwidth}%
\centering
{\includegraphics[width=.85\columnwidth]{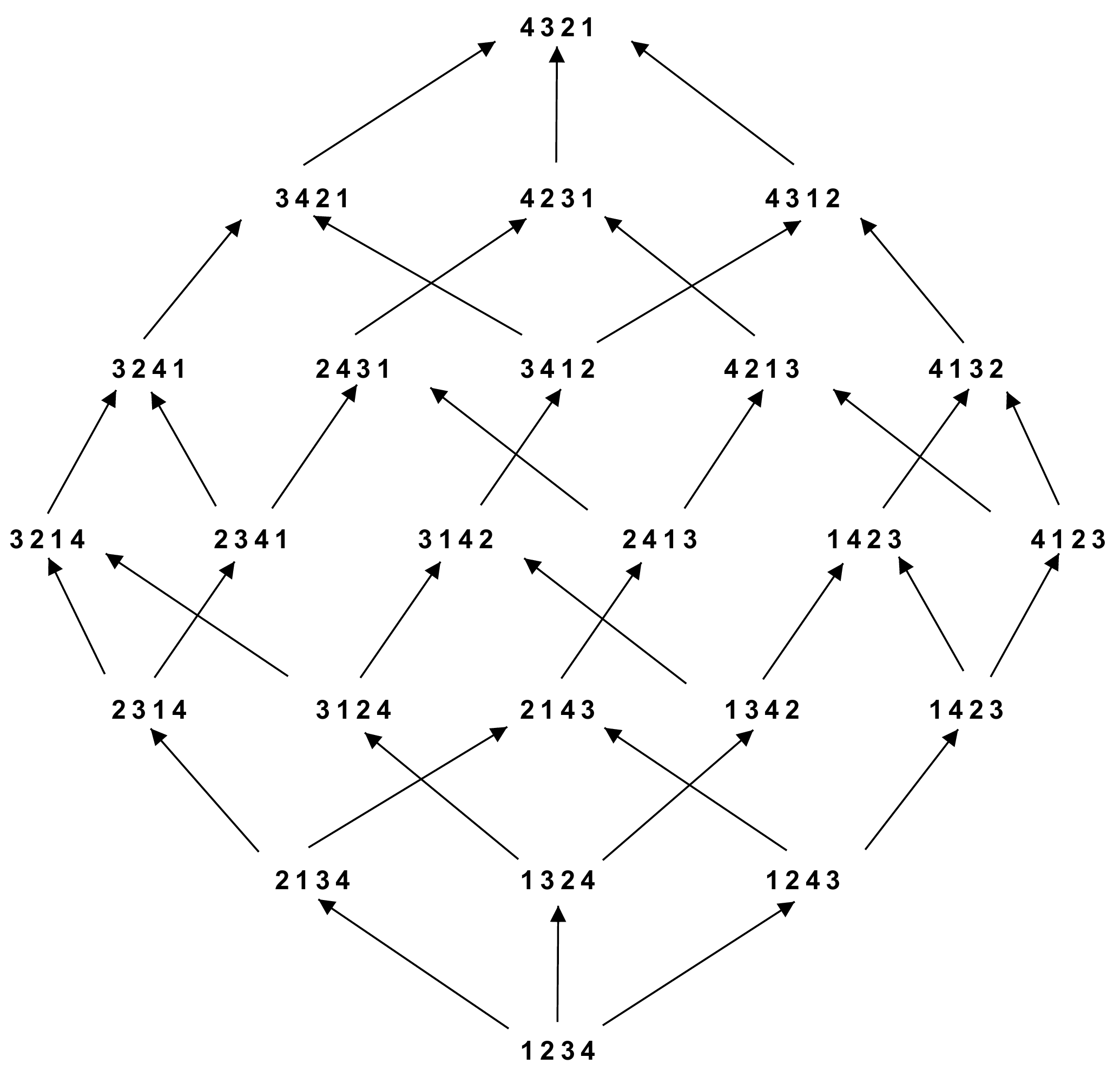}}
\subcaption{The Cayley graph of $S_n$ with Coxeter generators given by adjacent transpositions, for $n=4$.}
\label{subfig:caleyGraph}
\end{subfigure}%
\begin{subfigure}[b]{.5\linewidth}
\captionsetup{width=.9\textwidth}%
\centering
{\includegraphics[width=.8\columnwidth]{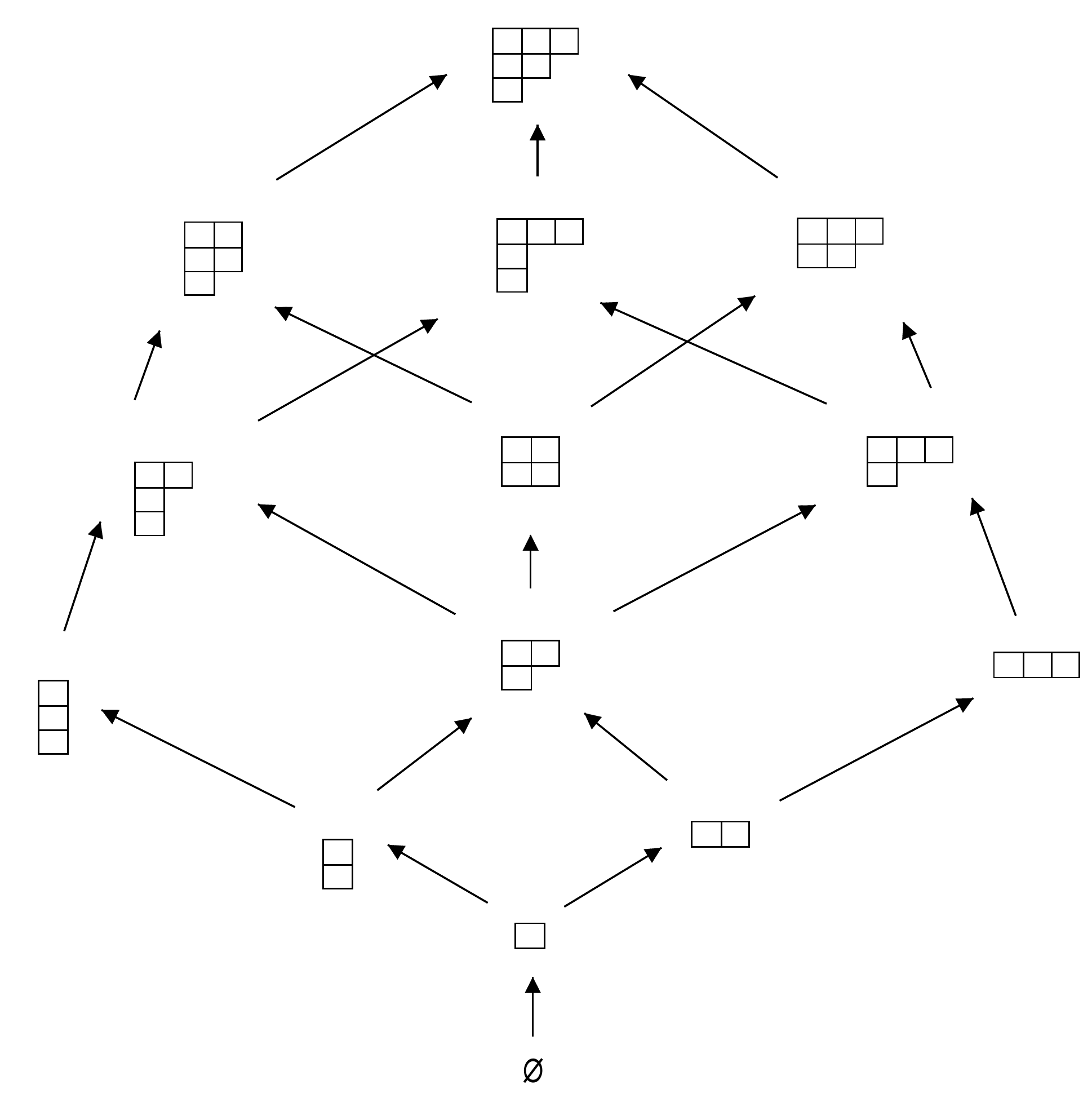}}
\subcaption{The Young sublattice $\young(\staircase_n)$ of all Young sub-diagrams of the staircase shape $\staircase_n$, for $n=4$.}
\label{subfig:youngGraph}
\end{subfigure}%
\caption{
Graphs related to the random walks representations of the oriented swap process and the randomly growing Young diagram model.
They can be regarded as \emph{directed} graphs, with edges directed from bottom to top.
}
\label{fig:graphs}
\end{figure}

\subsubsection*{Randomly growing a staircase shape Young diagram}

This process is a variant of the \firstmention{corner growth process}.
Starting from the empty Young diagram, boxes are successively added at random times, one box at each step, to form a larger diagram until the staircase shape $\staircase_n=(n-1,n-2,\ldots,1)$ is reached.
We identify each box of a Young diagram $\lambda$ with the position $(i,j)\in \N^2$, where $i$ and $j$ are the row and column index respectively.
All boxes are assigned independent Poisson clocks.
Each box $(i,j)\in \staircase_n$, according to its Poisson clock, attempts to add itself to the current diagram $\lambda$, succeeding if and only if $\lambda \cup \{(i,j)\}$ is still a Young diagram.
Notice that the randomly growing Young diagram model can be thought of as a continuous-time random walk, starting from $\emptyset$ and ending at $\staircase_n$, on the graph of Young diagrams contained in $\staircase_n$ (regarded in the obvious way as a directed graph).
See Fig.~\ref{subfig:youngGraph}.
Furthermore, note that every such random walk path is encoded by a standard Young tableau of shape $\staircase_n$, where the box added after $m$ steps is filled with $m$, for all $m=1,\dots, {n \choose 2}$.
For more details on this, see Subsection~\ref{subsec:staircaseTableaux} and, in particular, \eqref{eq:diagseq}.

We define $\bm{V}_n = (V_n(1), \ldots, V_n(n-1))$ as the vector that records when boxes along the $(n-1)$th anti-diagonal are added:
\[ 
V_n(k) := \text{the time at which the box at position $(n-k,k)$ is added.}
\]

\subsubsection*{The last passage percolation model}

This process describes the maximal time spent travelling from one vertex to another of the two-dimensional integer lattice along a directed path in a random environment. 
Let $(X_{i,j})_{i,j\ge 1}$ be an array of independent and identically distributed (i.i.d.) non-negative random variables, referred to as \firstmention{weights}.
For $(a,b), (c,d)\in \N^2$, define a \firstmention{directed lattice path} from $(a,b)$ to $(c,d)$ to be any sequence $\big( (i_k,j_k) \big)_{k=0}^m$ of minimal length $|c-a|+|d-b|$ such that $(i_0,j_0)=(a,b)$, $(i_m,j_m)=(c,d)$, and $\abs{i_{k+1}-i_k} + \abs{j_{k+1}-j_k} =1$ for all $0\leq k<m$.
We then define the \firstmention{Last Passage Percolation} (LPP) time from $(a,b)$ to $(c,d)$ as
\begin{equation}
\label{eq:lpp}
L(a,b;c,d) := \max_{\pi \colon (a,b)\to (c,d)} \sum_{(i,j)\in \pi} X_{i,j} \, ,
\end{equation}
where the maximum is over all directed lattice paths $\pi$ from $(a,b)$ to $(c,d)$.
It is immediate to see that LPP times starting at a fixed point, say $(1,1)$, satisfy the recursive relation
\begin{equation}
\label{eq:recLPP}
L(1,1;i,j)=\max\left\{L(1,1;i-1,j), L(1,1;i,j-1)\right\}+X_{i,j} \, ,
\qquad i,j\geq 1 \, ,
\end{equation}
with the boundary condition $L(1,1;i,j):=0$ if $i=0$ or $j= 0$.

If the weights $X_{i,j}$ are i.i.d.\ exponential random variables of rate $1$, the LPP model has a precise connection (see \cite[Ch.~4]{romik15}) with the corner growth process, whereby each random variable $L(1,1;i,j)$ is the time when box $(i,j)$ is added to the randomly growing Young diagram.
We can thus equivalently define $\bm{V}_n$ in terms of the last passage times between the fixed vertex $(1,1)$ and the vertices $(i,j)$ along the anti-diagonal line $i+j=n$:
\begin{equation}
\label{eq:def-vn}
\bm{V}_n = (L(1,1; n-1,1), L(1,1; n-2,2), \ldots, L(1,1; 1,n-1) ) \, .
\end{equation}
We refer to this as the \emph{point-to-line} LPP vector (see the illustration in Fig.~\ref{subfig:point-to-line} and the discussion in Subsection~\ref{subsec:absTimes} below).

Observe that $V_n(k)$ is the LPP time between two opposite vertices of the rectangular lattice $[1,n-k] \times [1,k]$, namely $(1,1)$ and $(n-k,k)$.
On the other hand, we can also consider the `dual' last passage times between the other two opposite vertices of the same rectangles, defining the vector $ \bm{W}_n = (W_n(1), \ldots, W_n(n-1))$ as
\begin{equation}
\label{eq:def-wn}
\bm{W}_n := (L(n-1,1; 1,1), L(n-2,1; 1,2), \ldots, L(1,1;1,n-1) ) \, .
\end{equation}
In this case, the starting and ending points for each last passage time vary simultaneously along the two lines $i=1$ and $j=1$, respectively.
We then refer to this vector $\bm{W}$ as the \emph{line-to-line} LPP vector (see Fig.~\ref{subfig:line-to-line}).

\begin{figure}
\centering
\begin{subfigure}[b]{.5\linewidth}
\centering
{\includegraphics[width=.7\columnwidth]{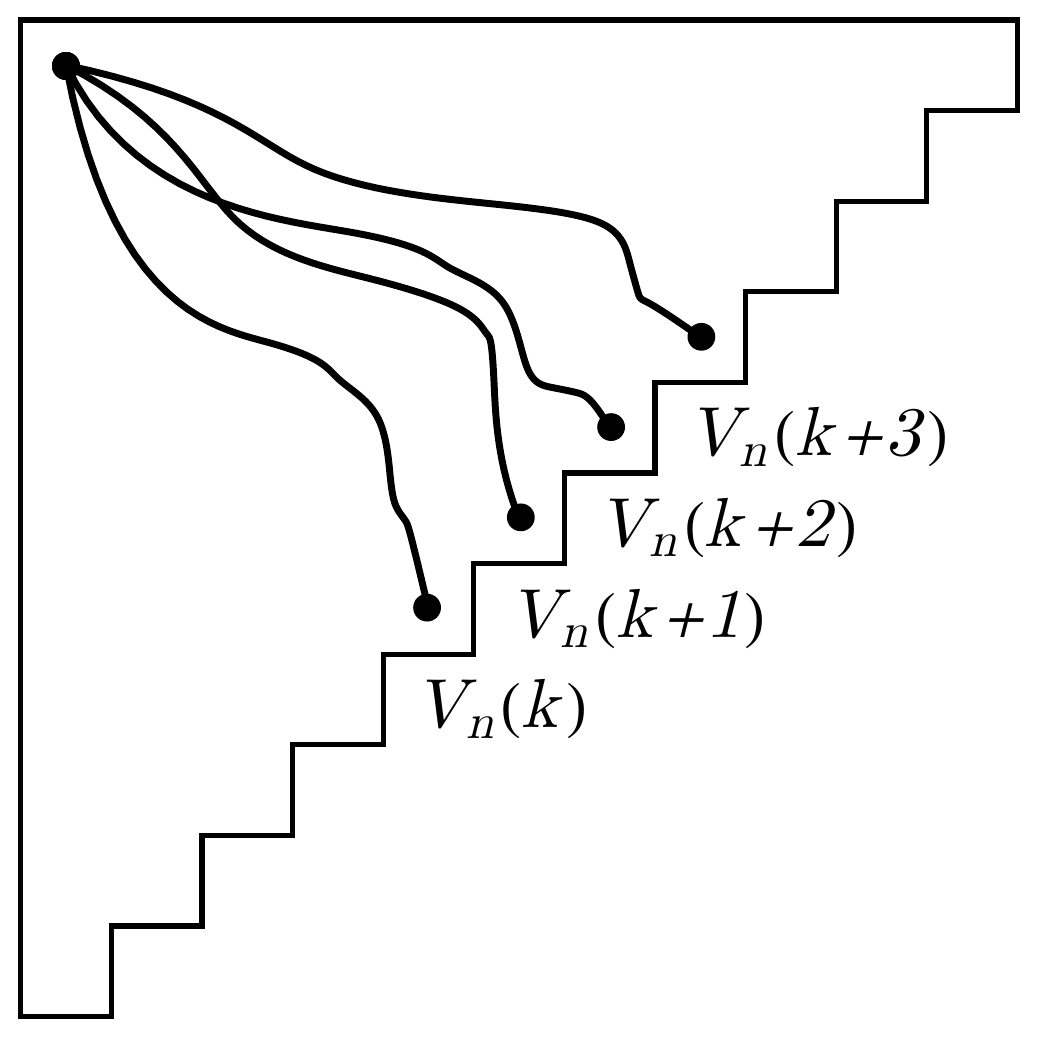}}
\subcaption{Point-to-line LPP vector $\bm{V}_n$.}
\label{subfig:point-to-line}
\end{subfigure}%
\begin{subfigure}[b]{.5\linewidth}
\centering
{\includegraphics[width=.895\columnwidth]{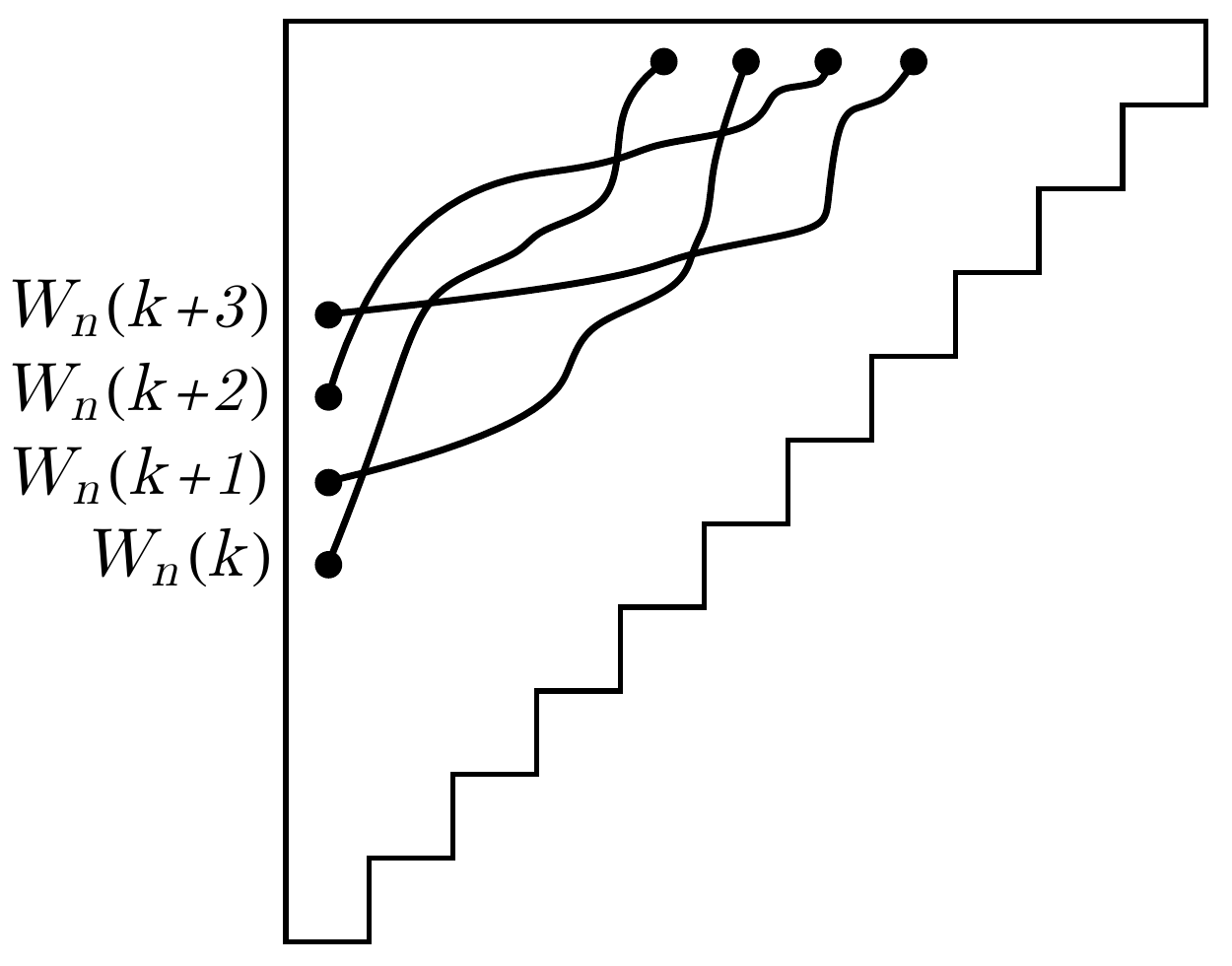}}
\subcaption{Line-to-line LPP vector $\bm{W}_n$.}
\label{subfig:line-to-line}
\end{subfigure}%
\caption{A schematic illustration of point-to-line and line-to-line last passage percolation vectors.
}
\label{fig:lpp-vectors}
\end{figure}

\subsection{Main results}
\label{subsec:results}

We can now state our results.

\begin{thm}
\label{thm:main-thm1}
$\bm{V}_n \overset{D}{=} \bm{W}_n$ for all $n\ge2$.
\end{thm}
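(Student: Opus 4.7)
The strategy is to use the duality between the Robinson--Schensted--Knuth (RSK) and Burge correspondences, applied to the i.i.d.\ exponential weight array $X = (X_{i,j})_{i+j \leq n}$ in the staircase region.

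First I would apply the RSK correspondence to the staircase-restricted array, in the growth-diagram formulation of Fomin. This produces a family of Young diagrams $\lambda^{(a,b)}$ indexed by lattice points $(a,b)$ with $a+b \leq n$, where $\lambda^{(a,b)}$ is the shape obtained from RSK applied to the sub-array $(X_{i,j})_{i \leq a,\, j \leq b}$. The standard RSK--LPP correspondence (essentially Greene's theorem in the continuous-weight setting) gives $\lambda^{(a,b)}_1 = L(1,1; a,b)$. Restricting to the antidiagonal $(a,b) = (n-k,k)$ yields $V_n(k) = \lambda^{(n-k,k)}_1$, so $\bm{V}_n$ is determined by the antidiagonal slice of the RSK shape process.

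Next I would apply the Burge correspondence, again in growth-diagram form, to the same weight array. Burge differs from RSK in the order in which the biword associated with $X$ is read and inserted, and the resulting Young diagrams $\mu^{(a,b)}$ should encode the \emph{line-to-line} LPP times: $\mu^{(a,b)}_1 = L(a, 1; 1, b)$. Consequently $W_n(k) = \mu^{(n-k,k)}_1$, so that $\bm{W}_n$ is determined by the antidiagonal slice of the Burge shape process. An important combinatorial step here is to verify that Burge growth, rather than RSK growth, computes the corner-swapped (line-to-line) LPP times; this is a Greene-type statement for the Burge correspondence in the continuous setting.

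Finally, I would show that, under the i.i.d.\ exponential measure on $X$, the two shape families $(\lambda^{(a,b)})$ and $(\mu^{(a,b)})$ are equal in joint distribution. This is the crux of the argument and rests on the RSK--Burge duality: both correspondences are bijections between weight arrays and pairs of (continuous) tableaux, and under the i.i.d.\ exponential measure they both push forward to the same measure on tableau pairs, namely the continuous analog of the classical Schur measure. Since $\bm{V}_n$ and $\bm{W}_n$ are obtained by applying the same first-row extraction along the antidiagonal to the two shape processes, the distributional identity $\bm{V}_n \overset{D}{=} \bm{W}_n$ follows.

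The main obstacle is this last step: establishing joint, rather than merely marginal, equality in distribution of the two shape processes. Marginal equality $V_n(k) \overset{D}{=} W_n(k)$ is immediate from an in-rectangle row reversal together with the i.i.d.\ property, but does not control the correlation structure across different values of $k$. The joint identity requires the full RSK--Burge duality, i.e.\ a careful comparison of the two growth rules under exponential weights, or equivalently a direct measure-preserving bijection between the RSK and Burge outputs that identifies the antidiagonal first-row statistics.
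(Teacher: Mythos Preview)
Your plan is essentially the paper's approach: both use the Greene-type identities to identify the border (antidiagonal) entries of $\RSK(X)$ with $\bm{V}_n$ and those of $\Burge(X)$ with $\bm{W}_n$, and then argue that $\RSK(X)\overset{D}{=}\Burge(X)$ jointly. The paper does this first for i.i.d.\ geometric weights and passes to exponential by a scaling limit.

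The one place your plan leaves a genuine gap is the crux you yourself flag at the end. Saying that both correspondences push forward to ``the continuous analog of the Schur measure'' is an assertion, not an argument, and invoking Schur measure in the usual sense only controls the shape at a fixed rectangle, not the joint law along the whole antidiagonal. The paper closes this gap with a concrete mechanism: regard $\RSK$ and $\Burge$ as bijections from tableaux of shape $\lambda$ to \emph{interlacing} tableaux of the same shape $\lambda$; then both satisfy the \emph{same} sum identity
\[
\sum_{(i,j)\in\lambda} x_{i,j} \;=\; \sum_{(i,j)\in\lambda} \omega_{i,j}\, \RSK(x)_{i,j} \;=\; \sum_{(i,j)\in\lambda} \omega_{i,j}\, \Burge(x)_{i,j},
\]
for fixed signs $\omega_{i,j}\in\{-1,0,+1\}$ depending only on $\lambda$ (this drops out of the $k=\min(m,n)$ case of the Greene identities for each correspondence). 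Since the i.i.d.\ geometric law of $X$ depends on the input only through its total sum, this immediately gives $\P(\RSK(X)=t)=\P(\Burge(X)=t)$ for every interlacing tableau $t$, i.e.\ full joint equality, not just marginals. Without this sum-preservation ingredient (or the equivalent Jacobian-$1$ computation in the continuous setting) your last step is not yet a proof.
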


\begin{conj}
\label{main-conj}
$\bm{U}_n \overset{D}{=} \bm{V}_n$  for all $n\ge2$.
\end{conj}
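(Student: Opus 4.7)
The plan is to reduce Conjecture~\ref{main-conj} to a purely combinatorial identity on staircase standard Young tableaux by integrating out the Poisson timing on both sides, using the Edelman--Greene (EG) correspondence for the oriented swap process and the standard corner growth link for~$\bm V_n$. Both $\bm U_n$ and $\bm V_n$ are constructed from independent rate-$1$ exponential clocks, so each admits a ``discrete data $\times$ exponential waiting times'' integral representation; the identity then splits into a matching of discrete weights on $\syt(\staircase_n)$ together with a matching of conditional densities.

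Concretely, I would first encode the OSP sample path as its sequence of successful swaps, a reduced word $\rho$ of the longest permutation $w_0 \in S_n$. Via EG, $\rho$ maps to a pair $(P,Q)$ with $Q\in\syt(\staircase_n)$; the last-swap time $U_n(k)$ is then the timestamp of the final letter equal to~$k$ in $\rho$, i.e.\ a combinatorial statistic of $(Q,\rho)$ convolved with exponentials. On the other side, the corner growth process directly produces a uniformly random $T\in\syt(\staircase_n)$ (by the memorylessness of the rate-$1$ clocks at each addable box), and $V_n(k)=L(1,1;n-k,k)$ is determined by $T$ together with the weights $X_{i,j}$ via~\eqref{eq:lpp}--\eqref{eq:recLPP}. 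Writing
\[
\P(\bm{U}_n \in \cdot) = \sum_{Q \in \syt(\staircase_n)} \mu_{\mathrm{EG}}(Q)\, F_Q(\cdot),
\qquad
\P(\bm{V}_n \in \cdot) = \frac{1}{\abs{\syt(\staircase_n)}}\sum_{T \in \syt(\staircase_n)} G_T(\cdot),
\]
reformulates the conjecture as the matching of these two sums of continuous densities, which is a finite identity tractable by computer for small~$n$.

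The main obstacle is the comparison of the two laws on $\syt(\staircase_n)$: the corner growth pushforward is uniform, whereas $\mu_{\mathrm{EG}}$ is a highly non-uniform weighting, produced by OSP dynamics through EG and tracking both the number of reduced words of $w_0$ mapping to a given $Q$ and the clock-selection probabilities along each history. No standard hook-length, Cauchy, or RSK-type identity seems to reconcile them directly, and the exponential factors $F_Q$ and $G_T$ cannot be decoupled from these discrete weights in an obvious way. Finding a bijective or generating-function proof of the resulting combinatorial identity appears to require genuinely new combinatorics of the Edelman--Greene correspondence, which is why I would expect to verify the identity only for small~$n$ (as the authors do for $n\le 6$) while the general case remains the hard part.
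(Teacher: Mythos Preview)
Your overall strategy---decompose each density as a mixture over discrete paths with exponential-waiting conditional densities, then compare the resulting finite sums---is exactly the paper's route in Section~\ref{sec:comb-iden}. But your execution contains a concrete error that misframes the obstruction. You assert that the corner growth process produces a \emph{uniformly} random $T\in\syt(\staircase_n)$. It does not: memorylessness of rate-$1$ clocks only implies that at each step the next box is uniform among the currently addable boxes, giving $\P(T=t)=\prod_{j=0}^{N-1}\deg_t(j)^{-1}$ as in~\eqref{eq:deg_t}, which depends on $t$ through the out-degree sequence and is far from uniform on $\syt(\staircase_n)$. Symmetrically, the OSP path law is $\P(S=s)=\prod_{j=0}^{N-1}\deg_s(j)^{-1}$ as in~\eqref{eq:deg_s}. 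So the dichotomy you set up---uniform on one side, non-uniform on the other---is wrong; both sides are non-uniform product-of-inverse-degrees measures on their respective graphs.

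The paper's reformulation also differs from yours in that it does \emph{not} push the OSP through Edelman--Greene to land on $\syt(\staircase_n)$. It keeps the two sums over their native index sets ($\sort_n$ and $\syt(\staircase_n)$), takes Fourier transforms on each Weyl chamber, and arrives at the generating-function identity $F_n=G_n$ of~\eqref{eq:comb-iden}. Edelman--Greene enters only to show $\fin_{\eg(t)}=\diag_t$ (Proposition~\ref{prop:edelman-greene-params}), which recovers a limiting case~\eqref{eq:genfun-G_limiting} but not the full identity; indeed the paper notes that $f_t\neq g_{\eg(t)}$ in general and that \emph{no} bijection $\syt(\staircase_n)\to\sort_n$ can match the generating factors termwise already for $n=4$. (Incidentally, for $w_0$ the EG map is a bijection $\sort_n\to\syt(\staircase_n)$, so ``the number of reduced words mapping to a given $Q$'' is always $1$; that part of your description of $\mu_{\mathrm{EG}}$ is also off.) The genuine obstruction is that two distinct non-uniform mixtures, with different conditional densities indexed by different graphs, must coincide---which is why the conjecture remains open beyond the computer verification for $n\le 6$.
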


One might hope to prove Theorem~\ref{thm:main-thm1} and Conjecture~\ref{main-conj} by methods similar to those used to derive standard relations about last passage percolation. 
For example, the LPP recursive relation~\eqref{eq:recLPP} yields an explicit recursive formula for the joint density of $\bm{V}_n$,
\begin{equation}
\label{eq:jointDensityRecursive}
\begin{split}
p_{\bm{V}_n}(v_1,\dots,v_{n-1})
= &\int_0^{\min(v_1, v_2)} \diff y_1
\int_0^{\min(v_2, v_3)} \diff y_2
\cdots
\int_0^{\min(v_{n-2}, v_{n-1})} \diff y_{n-2} \\
&\small\times\exp\left\{\sum_{k=1}^{n-1} \big[\max(y_{k-1}, y_k) - v_k\big]\right\} p_{\bm{V}_{n-1}}(y_1,\dots, y_{n-2})
\end{split}
\end{equation}
for $n\geq 3$, with the convention that $y_0=y_{n-1}=0$, with the initial condition $p_{\bm{V}_{2}}(v) = \e^{-v} \1_{[0,\infty)}(v)$.
Surprisingly, formula~\eqref{eq:jointDensityRecursive} also holds for the line-to-line LPP vector $\bm{W}_n$ (as it must, by virtue of Theorem~\ref{thm:main-thm1}); Conjecture~\ref{main-conj} says that the joint density of $\bm{U}_n$ should also satisfy the same recursive relation.
However, we know of no simple recursive structure in the corresponding models to make possible such a direct proof.

Theorem~\ref{thm:main-thm1} and Conjecture~\ref{main-conj}  imply the equality of the one-dimensional marginal distributions
\begin{equation}
\label{eq:one-dim-marginal}
U_n(k) \overset{D}{=} V_n(k) \overset{D}{=} W_n(k),\quad\text{for all $1\le k\le n-1$, $n\geq2$}.
\end{equation}
The identity  $U_n(k) \overset{D}{=} V_n(k)$ was proved by Angel, Holroyd and Romik~\cite{angelHolroydRomik09} using a connection between the oriented swap process, the TASEP and the corner growth model.
The identity $V_n(k) \overset{D}{=} W_n(k)$ follows immediately from the observation that these two variables are the LPP times, on the same i.i.d.\ environment $(X_{i,j})_{i,j\ge 1}$, between two pairs of opposite vertices of the same rectangular lattice $[1,n-k] \times [1,k]$.

It is also easy to see that the following two-dimensional marginals coincide
\begin{equation}
\label{eq:special-marginal}
\left(U_n(1), U_n(n-1)\right) 
\overset{D}{=}
\left(V_n(1), V_n(n-1)\right)
\overset{D}{=}
\left(W_n(1), W_n(n-1)\right),
\end{equation}
for all $n \ge 2$.
The second equality actually holds almost surely, since  $\bm{V}_n$ and $\bm{W}_n$ are LPP vectors on the same environment $(X_{i,j})_{i,j\ge 1}$.
To check the first identity, observe that $U_n(n-1)$ and $U_n(1)$ are the finishing times of the first and last particle in the OSP, respectively.
Particle labelled $1$ (resp. $n$) jumps $n-1$ times only to the right (resp.\ to the left), always with rate $1$.
All these jumps are independent of each other, except the one that occurs when particles $1$ and $n$ are adjacent and swap.
Hence, $\left(U_n(1),U_n(n-1)\right)$ is jointly distributed as $(\Gamma+X, \Gamma'+X)$ where $\Gamma,\Gamma'$ are independent with $\operatorname{Gamma}(n-2,1)$ distribution and $X$ has $\operatorname{Exp}(1)$ distribution and is independent of $\Gamma,\Gamma'$.
This is the same joint distribution of the LPP times $\left(V_n(1), V_n(n-1)\right)$.

Theorem~\ref{thm:main-thm1} is proved in Section~\ref{sec:LPP}.
As we will see, the distributional identity $\bm{V}_n \overset{D}{=} \bm{W}_n$ arises as a special case of a more general family of identities (Theorem~\ref{thm:UpDownLPP}) involving LPP times between pairs of opposite vertices in rectangles $[1,i]\times [1,j]$, where each $(i,j)$ belongs to the so-called border strip of a Young diagram.
This result is, in turn, a consequence of the duality between the RSK and Burge correspondences, and holds also in the discrete setting where the weights $X_{i,j}$ follow a geometric distribution.
Theorem~\ref{thm:main-thm1} can be seen as a special case of a ``shift-invariance'' symmetry, conjectured in~\cite{borodinGorinWheeler19} for a variety of integrable stochastic systems, and recently proved in full generality in~\cite[Theorem~1.2]{dauvergne21}.

On the other hand, the conjectural equality in distribution between $\bm{U}_n$ and $\bm{V}_n$ remains mysterious, but we made some progress towards understanding its meaning by reformulating it as an algebraic-combinatorial identity that is of independent interest.
\begin{conj} 
\label{main-conj-reformulated}
For $n\ge 2$ we have the identity of vector-valued generating functions
\begin{equation}
\label{eq:comb-iden}
\sum_{t\in \syt(\staircase_n)} f_t(x_1,\ldots,x_{n-1}) \sigma_t
= \sum_{s\in \sort_n} 
g_s(x_1,\ldots,x_{n-1}) \pi_s \, .
\end{equation}
\end{conj}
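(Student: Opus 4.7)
The plan is to use the Edelman--Greene correspondence $\eg \colon \sort_n \to \syt(\staircase_n)$ to transport the right-hand sum of~\eqref{eq:comb-iden} to the same index set as the left-hand sum. Grouping sorting networks by their EG-image reduces Conjecture~\ref{main-conj-reformulated} to a family of fiberwise identities of the form
\begin{equation*}
f_t(x_1,\ldots,x_{n-1}) \, \sigma_t
= \sum_{s \in \eg^{-1}(t)} g_s(x_1,\ldots,x_{n-1}) \, \pi_s,
\qquad t \in \syt(\staircase_n).
\end{equation*}
The probabilistic heuristic behind this organization is Conjecture~\ref{main-conj}: if $\bm{U}_n \overset{D}{=} \bm{V}_n$ is realized via EG, then the anti-diagonal entries of the staircase recording tableau should encode the same joint law as the last-swap times, and the fiberwise decomposition above is essentially the conditional distribution of the sorting network given its recording tableau.

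Second, I would match the two sides step by step. On the tableau side, the polynomial $f_t$ should factor according to the positions of the anti-diagonal boxes of $t$, in line with the recursive density~\eqref{eq:jointDensityRecursive} and the identification of staircase SYTs with corner-growth paths. On the sorting-network side, $g_s$ should carry the exponential-clock contribution of each adjacent swap, and summing $g_s \pi_s$ across an EG-fiber should collapse, via cancellations governed by the braid relations, into a single polynomial multiple of $\sigma_t$. As intermediate checkpoints one can verify the total-mass identity $\sum_t f_t = \sum_s g_s$, the one-dimensional marginals~\eqref{eq:one-dim-marginal}, and the corner marginals~\eqref{eq:special-marginal}, all of which are already known. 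An induction on $n$ driven by the deletion of the corner box $(n-1,1)$ of the staircase --- equivalently, by conditioning on the last swap involving particles $1$ and $n$ in the OSP --- mirrors the recursion~\eqref{eq:jointDensityRecursive} and gives a natural inductive scaffold.

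The main obstacle, and the reason the identity is genuinely conjectural rather than routine, is that EG does not respect the ``anti-diagonal projection'' that defines the vectors $\bm{U}_n$ and $\bm{V}_n$: the recording tableau $\eg(s)$ of a sorting network $s$ captures global insertion data rather than the local timestamps of the anti-diagonal swaps. Consequently the sum over an EG-fiber does not reduce transparently to the desired $f_t \sigma_t$, and a new combinatorial statistic on fibers appears to be needed. I would search for this statistic using promotion and jeu de taquin symmetries on $\syt(\staircase_n)$, which are known to intertwine well with EG, or alternatively seek an indirect proof through generating-function identities, in analogy with the RSK/Burge duality exploited in the proof of Theorem~\ref{thm:main-thm1}. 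Given that the authors' computer-assisted verification only reaches $n=6$, I expect this matching step to be where fundamentally new ideas are required.
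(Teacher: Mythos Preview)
The statement you are addressing is a \emph{conjecture}; the paper does not prove it. What the paper does is (i) establish the equivalence of Conjecture~\ref{main-conj-reformulated} with the probabilistic Conjecture~\ref{main-conj} via a Fourier-transform computation on Weyl chambers (Subsection~\ref{subsec:equivalenceConjectures}), and (ii) verify the identity for $n\le 6$ by direct symbolic computation. So there is no ``paper's own proof'' to compare against, and your proposal should be read as a research plan rather than a proof.

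That plan, however, rests on a structural misconception. The Edelman--Greene correspondence is a \emph{bijection} between $\syt(\staircase_n)$ and $\sort_n$, so every fiber $\eg^{-1}(t)$ is a singleton. Your ``fiberwise identity''
\[
f_t(x_1,\ldots,x_{n-1})\,\sigma_t \;=\; \sum_{s\in \eg^{-1}(t)} g_s(x_1,\ldots,x_{n-1})\,\pi_s
\]
therefore collapses to the termwise equality $f_t\,\sigma_t = g_s\,\pi_s$ for the unique $s$ corresponding to $t$. Since $\pi_s=\sigma_t$ by Proposition~\ref{prop:edelman-greene-params}, this is just $f_t=g_{s}$, which the paper explicitly notes is false (see the remark following~\eqref{eq:genfun-G_limiting} and Examples~\ref{ex:syt6}--\ref{ex:sortingnet6}). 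There is no sum over a fiber in which ``cancellations governed by the braid relations'' could occur. Worse, the paper observes that for $n=4$ the multisets $\{f_t\}$ and $\{g_s\}$ already differ, so \emph{no} bijection $\phi\colon \syt(\staircase_n)\to\sort_n$ can satisfy $f_t=g_{\phi(t)}$; any approach that tries to match the two sides term by term, via $\eg$ or otherwise, is ruled out. Whatever mechanism underlies~\eqref{eq:comb-iden} must genuinely mix contributions across many $t$'s and $s$'s within each $\gamma$-component, and your proposal does not supply such a mechanism.
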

Precise definitions and examples will be given in Section~\ref{sec:comb-iden}, where we will prove the equivalence between Conjectures~\ref{main-conj} and~\ref{main-conj-reformulated}.
For the moment, we only remark that the sums on the left-hand and right-hand sides of~\eqref{eq:comb-iden} range over the sets of staircase shape standard Young tableaux $t$ and sorting networks $s$ of order~$n$, respectively; $f_t$ and $g_s$ are certain rational functions, and $\sigma_t$, $\pi_s$ are permutations in the symmetric group $\sym_{n-1}$ that are associated with $t$ and $s$. 

The identity \eqref{eq:comb-iden} reduces the proof of $\bm{U}_n \overset{D}{=} \bm{V}_n$ for fixed~$n$ to a concrete finite computation.
This enabled us to provide a computer-assisted verification of Conjecture~\ref{main-conj} for $4\le n\le 6$ (the cases $n=2,3$ can be checked by hand) and thus prove the following:
\begin{thm}
\label{thm:main-thm2}
$\bm{U}_n \overset{D}{=} \bm{V}_n$  for  $2\le n\le 6$.
\end{thm}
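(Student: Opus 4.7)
The plan is to reduce Theorem~\ref{thm:main-thm2} to the algebraic-combinatorial identity of Conjecture~\ref{main-conj-reformulated} and then verify that identity directly for each $n$ in the required range.

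First, I would invoke the equivalence between Conjectures~\ref{main-conj} and~\ref{main-conj-reformulated} established in Section~\ref{sec:comb-iden}. This converts the probabilistic identity $\bm{U}_n \overset{D}{=} \bm{V}_n$ into the purely symbolic statement that two vector-valued generating functions, indexed by $\syt(\staircase_n)$ on the left and $\sort_n$ on the right, coincide as elements of the group algebra $\C(x_1,\dots,x_{n-1})[\sym_{n-1}]$. For $n=2$ the identity is tautological, with a single summand on each side. For $n=3$ there are two staircase tableaux and two sorting networks, and the identity reduces to a short rational-function calculation that can be carried out by hand.

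For $4\le n\le 6$ the verification is performed by computer. The procedure is: enumerate $\syt(\staircase_n)$, whose cardinalities are $16$, $768$, and $292864$ for $n=4,5,6$ (matching $|\sort_n|$ via the Edelman--Greene bijection); compute for each $t$ the rational function $f_t$ and the permutation $\sigma_t\in\sym_{n-1}$; do the same for each $s\in\sort_n$. Then group the summands on both sides of \eqref{eq:comb-iden} by the $\sym_{n-1}$-index $\tau$, and check, for each $\tau\in\sym_{n-1}$, the scalar identity
\[
\sum_{\substack{t\in \syt(\staircase_n) \\ \sigma_t=\tau}} f_t(x_1,\dots,x_{n-1})
\;=\;
\sum_{\substack{s\in \sort_n \\ \pi_s=\tau}} g_s(x_1,\dots,x_{n-1}).
\]
Both enumerations are standard: staircase SYT can be generated box-by-box or by promotion, and reduced words of the longest element $w_0\in\sym_n$ can be produced by depth-first search on the weak order of $\sym_n$.

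The principal obstacle is the combinatorial explosion at $n=6$, where there are nearly $3\times 10^5$ sorting networks, each contributing a rational function in five variables whose denominator is a product of linear forms in the $x_i$. A naive reduction to a single common denominator would produce unwieldy polynomials, so the practical approach is to exploit the $\sym_{n-1}$-grading (as above) to reduce the check to at most $(n-1)!$ independent scalar identities, and then rely on a computer algebra system such as SageMath or Mathematica for the symbolic simplification. Once all grouped coefficients are found to agree, Conjecture~\ref{main-conj-reformulated}, and hence Conjecture~\ref{main-conj}, is verified for the given $n$.
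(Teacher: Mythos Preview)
Your proposal is correct and follows essentially the same approach as the paper: reduce the distributional identity to the combinatorial identity of Conjecture~\ref{main-conj-reformulated} via the equivalence proved in Section~\ref{sec:comb-iden}, handle $n=2,3$ by hand, and verify $4\le n\le 6$ by a symbolic computer-algebra calculation (the paper uses Mathematica and notes the same cardinalities $16$, $768$, $292864$). Your additional remarks on grouping by the $\sym_{n-1}$-index are a reasonable implementation detail but do not change the underlying strategy.
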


\subsection{Absorbing times and random matrices}
\label{subsec:absTimes}

Conjecture~\ref{main-conj} has an important consequence in the asymptotic analysis of the oriented swap process.
Specifically, it addresses the open problem posed in~\cite{angelHolroydRomik09} (see also~\cite[Ex.~5.22(e), p.~331]{romik15}) about the limiting distribution, as $n\to\infty$, of
\begin{equation}
\label{eq:absTimeOSP}
U^{\max}_n := \max_{1\le k\le n-1} U_n(k) \, ,
\end{equation}
i.e.\ the absorbing time of the OSP on $n$ particles.

Observe first that the random variable
\begin{equation}
\label{eq:pointToLineLPP}
V^{\max}_n := \max_{1\leq k\leq n-1} V_k = \max_{\substack{\pi \colon (1,1)\to (a,b), \\ a+b=n}} \sum_{(i,j)\in \pi} X_{i,j} \, ,
\end{equation}
where $(X_{i,j})_{i,j\ge 1}$ are i.i.d.\ exponential random variables of rate $1$, represents the time until the staircase shape $\delta_n$ is reached in the corner growth process.
As the last expression in~\eqref{eq:pointToLineLPP} points out, it can also be seen as the maximal time spent travelling from the point $(1,1)$ to any point of the line $\{(a,b)\in \N^2\colon a+b = n\}$ along a directed path in an exponentially distributed random environment.
This variable has been referred to as the \emph{point-to-line} last passage percolation time and has been an object of study in the literature.

It is known that the point-to-line LPP time $V^{\max}_n$ with exponential weights is exactly distributed as the largest eigenvalue $\lambda^{(n)}_{\max}$ of an $n\times n$ random matrix drawn from the Laguerre Orthogonal Ensemble (LOE) --- see e.g.~\cite{baikRains01a, fitzgeraldWarren20}.
In the limit as $n\to\infty$, $\lambda^{(n)}_{\max}$ features KPZ fluctuations of order $n^{1/3}$ and has the $\beta=1$ Tracy--Widom distribution (first obtained by Tracy and Widom in~\cite{tracyWidom96}) as its limiting law; see~\cite[Theorem~1.1]{johnstone01}.

The asymptotic distribution of the point-to-line LPP time and some closely related random variables have also been studied independently of its connection with random matrix theory.
Baik and Rains~\cite{baikRains01b} proved a limit theorem for a conceptually related model, 
i.e.\ the length of the longest increasing subsequence of random involutions.
Borodin, Ferrari, Pr\"ahofer and Sasamoto~\cite{sasamoto05, borodinFerrariPrahoferSasamoto07} studied the asymptotic distribution of the TASEP with particle-hole alternating (``flat'') initial configuration; using the usual correspondence between LPP and TASEP, this can be viewed as an analogous result for the point-to-line last passage percolation model.
More recently, Bisi and Zygouras~\cite[Theorem~1.1]{bisiZygouras19b} obtained the asymptotics of the point-to-line LPP time~\eqref{eq:pointToLineLPP} using the determinantal structure provided by symplectic Schur functions.

On the other hand, modulo Conjecture~\ref{main-conj}, we have that 
\begin{equation}
\label{eq:weakconj-eq-dist}
U^{\max}_n\overset{D}{=} V^{\max}_n.
\end{equation}
The precise knowledge of the (finite $n$ and asymptotic) distribution of $V^{\max}_n$ thus extends to $U^{\max}_n$.
\begin{coro}
\label{thm:absTimeOSP}
Let $U^{\max}_n$ be the absorbing time of the OSP on $n$ particles, as in~\eqref{eq:absTimeOSP}. Then, assuming Conjecture~\ref{main-conj}:
\begin{enumerate}[label=(\roman*)]
\item for any $n\geq 2$, $t\geq0$,
\begin{equation}
\label{eq:LOE}
\P\left(U^{\max}_n\leq t\right)=\frac{1}{C_{n}}\int_{[0,t]^{n-1}}\prod_{1\leq i<j\leq n-1}\abs{y_i-y_j} \prod_{i=1}^{n-1} \e^{-y_i} \diff y_i \, ,
\end{equation}
where $C_{n}$ is a normalization constant; 
\item the following limit in distribution holds:
\begin{equation}
\label{eq:F1}
\frac{U^{\max}_n - 2n}{(2n)^{1/3}} \xrightarrow{n\to\infty} F_1 \, ,
\end{equation}
where $F_1$ is the $\beta=1$ Tracy--Widom law.
\end{enumerate}
\end{coro}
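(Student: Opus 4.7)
The plan is to derive Corollary~\ref{thm:absTimeOSP} directly from Conjecture~\ref{main-conj} together with well-established results on the point-to-line last passage percolation time. The crucial first step is to observe that equality in distribution of the vectors $\bm{U}_n$ and $\bm{V}_n$ entails the equality
\[
U^{\max}_n \overset{D}{=} V^{\max}_n
\]
of their coordinatewise maxima. After this reduction, the entire content of the corollary amounts to importing the finite-$n$ and asymptotic distributions of $V^{\max}_n$ from the literature.

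For part~(i), I would invoke the identity, already recalled in the excerpt, between $V^{\max}_n$ and the largest eigenvalue of a suitable Laguerre Orthogonal Ensemble matrix \cite{baikRains01a, fitzgeraldWarren20}. In the relevant normalization, the corresponding joint eigenvalue density is proportional to $\prod_{i<j}\abs{y_i - y_j}\prod_i \e^{-y_i}$ on $[0,\infty)^{n-1}$. The event $\{V^{\max}_n\le t\}$ translates into the event that every eigenvalue lies in $[0,t]$, so its probability is obtained by integrating this density over $[0,t]^{n-1}$, which is exactly~\eqref{eq:LOE} up to the normalization constant~$C_n$.

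For part~(ii), I would appeal to Johnstone's theorem~\cite[Theorem~1.1]{johnstone01} on Tracy--Widom fluctuations of the LOE largest eigenvalue: after the standard centering and $n^{1/3}$ rescaling, it converges in distribution to $F_1$. The precise centering $2n$ and scaling $(2n)^{1/3}$ appearing in~\eqref{eq:F1} reflect the rate-$1$ exponential normalization of the LPP weights and can be read off from Johnstone's formulation via the distributional identity used in part~(i), together with the strong convergence of continuous functionals implied by Conjecture~\ref{main-conj}. There is no genuine obstacle in the argument: once Conjecture~\ref{main-conj} is granted, both assertions are direct transfers of classical statements about the LOE, and the entire mathematical substance of the corollary sits in the conjectural identity $\bm{U}_n \overset{D}{=} \bm{V}_n$ itself.
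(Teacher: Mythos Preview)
Your proposal is correct and follows exactly the paper's own argument: reduce to $U^{\max}_n \overset{D}{=} V^{\max}_n$ via Conjecture~\ref{main-conj}, then import the LOE identity for $V^{\max}_n$ from~\cite{baikRains01a, fitzgeraldWarren20} for part~(i) and Johnstone's Tracy--Widom limit~\cite{johnstone01} for part~(ii). The only superfluous phrase is ``strong convergence of continuous functionals implied by Conjecture~\ref{main-conj}'' --- the conjecture gives an exact equality in distribution for each fixed $n$, and nothing beyond that is needed.
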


The integral formula in~\eqref{eq:LOE} is the distribution function of the largest eigenvalue in the Laguerre Orthogonal Ensemble (LOE).
It occurs in the following way.
Let $Y$ be an $n\times (n-1)$  matrix with entries that are independent real Gaussian random variables with mean zero and variance $1/2$.
Then the right-hand side in \eqref{eq:LOE} is the probability that the largest eigenvalue of $YY^T$  (also called a real Wishart matrix) is less than $t$ --- see e.g.~\cite[\S~3.2]{forrester10}.

As mentioned in the extended abstract version of this paper~\cite{bisiEtAl20}, the distributional limit~\eqref{eq:F1} answers the open problem posed in~\cite{angelHolroydRomik09} about the asymptotic distribution of the absorbing time of the OSP, conditionally on Conjecture~\ref{main-conj}.
Following the appearance of the extended abstract version of this paper, Bufetov, Gorin and Romik found a way to derive~\eqref{eq:weakconj-eq-dist} (and therefore deduce \eqref{eq:LOE} and \eqref{eq:F1}) by proving a weaker version of our Conjecture~\ref{main-conj} that equates the joint distribution functions of the random vectors $\bm{U}_n$ and $\bm{V}_n$  for `diagonal points', i.e.\ points  $(t,t,\ldots,t)\in \R^{n-1}$.
This is of course sufficient to imply equality in distribution of the maxima of the coordinates of the respective vectors.
Thus, the open problem from~\cite{angelHolroydRomik09} is now settled.

\begin{thm}[Bufetov-Gorin-Romik (2020)~\cite{bufetovGorinRomik21}]
The relations \eqref{eq:weakconj-eq-dist}, \eqref{eq:LOE} and \eqref{eq:F1} are true unconditionally.
\end{thm}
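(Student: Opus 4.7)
The plan is to prove the three assertions in sequence, with the bulk of the work going into establishing \eqref{eq:weakconj-eq-dist}; the formulas \eqref{eq:LOE} and \eqref{eq:F1} then follow from classical random matrix results. Since
\[
\{U^{\max}_n\le t\} = \bigcap_{k=1}^{n-1}\{U_n(k)\le t\}
\quad\text{and}\quad
\{V^{\max}_n\le t\} = \bigcap_{k=1}^{n-1}\{V_n(k)\le t\},
\]
the equality in law \eqref{eq:weakconj-eq-dist} is equivalent to the joint CDF identity
\[
\P\bigl(\bm{U}_n\le(t,\dots,t)\bigr) = \P\bigl(\bm{V}_n\le(t,\dots,t)\bigr),\qquad t\ge 0,
\]
which is strictly weaker than Conjecture~\ref{main-conj}. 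The task therefore reduces to proving this diagonal joint-CDF identity for every $t\ge 0$.

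To establish it, I would view the OSP as a continuous-time multi-species (colored) TASEP started from the identity permutation: particles are colored $1,\dots,n$, and a swap across bond $(k,k+1)$ succeeds only when the smaller color lies to the left. The event $\{U_n(k)\le t\}$ is the event that by time $t$ the colors at sites $k$ and $k+1$ are already in the terminal ascending order, i.e.\ a color-current observable across that bond has stabilized. On the LPP side, the standard coupling between exponential last passage percolation and single-species TASEP with step initial data rephrases each $\{V_n(k)\le t\}$ as a height-function event, which itself admits a colored-observable reformulation. The crucial step is then to massage both joint events into probabilities of colored observables differing only by a shift of rows and columns in the ambient lattice, so that the \emph{shift-invariance} conjectured by Borodin--Gorin--Wheeler~\cite{borodinGorinWheeler19} and proved in full generality by Dauvergne~\cite[Theorem~1.2]{dauvergne21} yields the desired equality in one stroke. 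This is the same mechanism by which Theorem~\ref{thm:main-thm1} itself can be derived, as noted in the discussion following that theorem.

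Once \eqref{eq:weakconj-eq-dist} is in hand, the distribution formula \eqref{eq:LOE} is immediate from the Baik--Rains and Fitzgerald--Warren identification $V^{\max}_n \overset{D}{=} \lambda^{(n)}_{\max}$~\cite{baikRains01a,fitzgeraldWarren20}, combined with the known joint density of the LOE eigenvalues, whose integral on $[0,t]^{n-1}$ is precisely the right-hand side of \eqref{eq:LOE}. The convergence \eqref{eq:F1} is then a direct application of Johnstone's Tracy--Widom $\beta=1$ theorem~\cite{johnstone01} for the largest eigenvalue of a real Wishart matrix, with the centering $2n$ and scaling $(2n)^{1/3}$ dictated by the rate-$1$ exponential normalization of the LPP environment.

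The main obstacle I anticipate lies in the second paragraph: exhibiting explicit colored-TASEP observables on the two sides whose joint probabilities coincide with the two sides of an instance of shift-invariance that is directly covered by~\cite[Theorem~1.2]{dauvergne21}. Once these observables and the matching row/column shift have been pinned down, the application of shift-invariance is algebraic, and the deduction of parts (i) and (ii) from \eqref{eq:weakconj-eq-dist} is routine.
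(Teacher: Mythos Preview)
The paper does not contain its own proof of this theorem. It is stated there purely as a citation of \cite{bufetovGorinRomik21}; the surrounding text only says that Bufetov, Gorin and Romik ``found a way to derive~\eqref{eq:weakconj-eq-dist}\ldots by proving a weaker version of our Conjecture~\ref{main-conj} that equates the joint distribution functions of the random vectors $\bm{U}_n$ and $\bm{V}_n$ for `diagonal points'.'' So there is nothing in this paper to compare your argument against, and a correct write-up here would simply refer the reader to \cite{bufetovGorinRomik21} rather than attempt an independent proof.

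That said, a word about your proposed route. What you wrote is a plan, not a proof: the only substantive step --- exhibiting explicit colored-TASEP observables for the $\bm{U}_n$ side and LPP observables for the $\bm{V}_n$ side that are related by an instance of the shift-invariance in \cite[Theorem~1.2]{dauvergne21} --- is precisely the step you flag as the ``main obstacle'' and do not carry out. Shift-invariance, in the Borodin--Gorin--Wheeler/Dauvergne sense, compares observables of the \emph{same} type that differ by a lattice shift; this is exactly why it yields $\bm{V}_n\overset{D}{=}\bm{W}_n$ (both are LPP vectors). The OSP absorbing times $U_n(k)$, by contrast, are multi-species observables, and getting from them to single-species LPP events requires an additional symmetry (of the ``color--position'' type) rather than a pure shift. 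The argument in \cite{bufetovGorinRomik21} does go through colored interacting particle systems, but via different machinery than a direct appeal to \cite{dauvergne21}; your outline, as written, does not supply the missing bridge. Your deduction of \eqref{eq:LOE} and \eqref{eq:F1} from \eqref{eq:weakconj-eq-dist}, on the other hand, is exactly what the paper already records in the discussion preceding the theorem and is fine.
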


\section{Equidistribution of LPP times and dual LPP times \\ along border strips}
\label{sec:LPP}

The goal of this section is to prove Theorem~\ref{thm:main-thm1}.
We will in fact prove a more general statement (Theorem~\ref{thm:UpDownLPP}), which establishes the joint distributional equality between LPP times and dual LPP times along the so-called `border strips'.

\subsection{LPP and dual LPP tableaux}
\label{subsec:LPP-dualLPP}

We first fix some terminology.
We say that $(i,j)$ is a \firstmention{border box} of a Young diagram $\lambda$ if $(i+1,j+1)\notin \lambda$, or equivalently if $(i,j)$ is the last box of its diagonal.
We refer to the set of border boxes of $\lambda$ as the \emph{border strip} of $\lambda$.
We say that $(i,j)\in \lambda$ is a \emph{corner} of $\lambda$ if $\lambda\setminus\{(i,j)\}$ is a Young diagram.
Note that every corner is a border box.
We refer to any array $x=\{x_{i,j}\colon (i,j)\in\lambda\}$ of non-negative real numbers as a \emph{tableau} of shape $\lambda$.
We call such an $x$ an \emph{interlacing tableau} if its diagonals interlace, in the sense that
\begin{equation}
\label{eq:interlacing}
x_{i-1,j} \leq x_{i,j} \quad \text{if } i>1
\qquad\quad \text{and} \qquad\quad
x_{i,j-1} \leq x_{i,j} \quad \text{if } j>1
\end{equation}
for all $(i,j)\in\lambda$, or equivalently if its entries are weakly increasing along rows and columns.
As a reference, see the tableaux in Fig.~\ref{fig:RSK-Burge}.
Their common shape $\lambda=(4,3,3,3,1)$ has border strip $\Fro=\{(1,4), (1,3), (2,3), (3,3), (4,3), (4,2), (4,1), (5,1)\}$, and corners $(1,4), (4,3), (5,1)$; the two tableaux on the right are interlacing.

Throughout this section, $\lambda$ will denote an arbitrary but fixed Young diagram.
Let now $X$ be a \emph{random} tableau of shape $\lambda$ with i.i.d.\ non-negative random entries $X_{i,j}$.
We can then define the associated LPP time $L(a,b;c,d)$ on $X$ between two boxes $(a,b),(c,d)\in\lambda$ as in~\eqref{eq:lpp}.
We will mainly be interested in the special $\lambda$-shaped tableaux
$L = (L_{i,j})_{(i,j) \in \lambda}$ and $L^* = (L^*_{i,j})_{(i,j) \in \lambda}$, which we respectively call the \firstmention{LPP tableau} and the \firstmention{dual LPP tableau}, defined by
\begin{equation}
\label{eq:LPP&dualLPP}
L_{i,j} := L(1,1; i,j)
\qquad \text{and} \qquad
L^*_{i,j} := L(i,1; 1,j) \, ,
\qquad
\text{for } (i,j)\in \lambda \, .
\end{equation}
It is easy to see from the definitions that $L$ and $L^*$ are both (random) interlacing tableaux.

Now, it is evident that, for each $(i,j)\in \lambda$, the distributions of $L_{i,j}$ and $L^*_{i,j}$ coincide.
However, the joint distributions of $L$ and $L^*$ do not coincide in general.
\begin{prop}
\label{prop:LPP&dualLPP_joint}
Let $X$ be a Young tableau of shape $\lambda$ with i.i.d.\ non-deterministic\footnote{In the sense that their common distribution is not a Dirac measure.} entries.
Then the corresponding LPP and dual LPP tableaux $L$ and $L^*$ follow the same law if and only if $\lambda$ is a hook shape (a Young diagram with at most one row of length $> 1$).
\end{prop}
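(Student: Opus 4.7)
The plan is to treat the two directions separately. The forward direction (hook implies equality in distribution) is essentially free: if $\lambda$ is a hook, then every box $(i,j)\in\lambda$ has $i=1$ or $j=1$, so there is a unique directed lattice path inside $\lambda$ from $(1,1)$ to $(i,j)$ and likewise a unique one from $(i,1)$ to $(1,j)$, and the two paths traverse the same boxes. Hence $L_{i,j}=L^*_{i,j}$ pointwise in the entries $X_{i,j}$, not merely in distribution, and in particular $L\overset{D}{=}L^*$.

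For the converse, suppose $\lambda$ is not a hook. Since the definition of hook forces $\lambda_2\leq 1$, its negation gives $(2,2)\in\lambda$, and the four LPP (resp.\ dual LPP) values at the boxes $(1,1),(1,2),(2,1),(2,2)$ depend only on the entries in the $2\times 2$ corner. I plan to detect the failure of the distributional equality through the ``discrete second difference'' at $(2,2)$,
\[
T := L_{2,2}+L_{1,1}-L_{1,2}-L_{2,1}, \qquad T^* := L^*_{2,2}+L^*_{1,1}-L^*_{1,2}-L^*_{2,1}.
\]
A short computation based on $L_{2,2}=X_{1,1}+X_{2,2}+\max(X_{1,2},X_{2,1})$ and on the two-path formula $L^*_{2,2}=X_{1,2}+X_{2,1}+\max(X_{1,1},X_{2,2})$ (the only two paths of minimal length from $(2,1)$ to $(1,2)$ pass through $(1,1)$ or $(2,2)$) yields
\[
T = X_{2,2}-\min(X_{1,2},X_{2,1}), \qquad T^* = \max\bigl(0,\,X_{2,2}-X_{1,1}\bigr) \geq 0.
\]
Thus $T^*\geq 0$ almost surely, whereas $T$ is negative exactly when $X_{2,2}$ is strictly smaller than both $X_{1,2}$ and $X_{2,1}$.

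The main (mild) obstacle is to verify that $\P(T<0)>0$ under the sole assumption that the common law of the $X_{i,j}$'s is not a Dirac mass, without any absolute-continuity hypothesis. For this, non-degeneracy of the common law yields some $c\in\R$ with $p:=\P(X_{1,1}\leq c)\in(0,1)$; by independence,
\[
\P(X_{2,2}\leq c,\ X_{1,2}>c,\ X_{2,1}>c) = p(1-p)^{2} > 0,
\]
and on that event $X_{2,2}\leq c<\min(X_{1,2},X_{2,1})$, whence $T<0$. Since $\P(T<0)>0=\P(T^*<0)$, the random variables $T$ and $T^*$ have different laws, so $L$ and $L^*$ cannot be equal in distribution, completing the proof.
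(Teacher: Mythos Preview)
Your proof is correct and follows essentially the same approach as the paper's: both directions are handled identically, with the converse proved by computing the discrete second difference $T=L_{2,2}+L_{1,1}-L_{1,2}-L_{2,1}=X_{2,2}-\min(X_{1,2},X_{2,1})$ and its dual $T^*=\max(0,X_{2,2}-X_{1,1})$, and then using non-degeneracy to find a threshold $c$ with $\P(X\leq c)\in(0,1)$ so that $\P(T<0)\geq p(1-p)^2>0$ while $T^*\geq 0$ a.s.
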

\begin{proof}
If $\lambda$ is a hook shape, then $L=L^*$ almost surely; in particular, the two tableaux have the same law.
Suppose now that $\lambda$ is not a hook shape, i.e.\ $(2,2)\in \lambda$.
By definition of $L$ and $L^*$, we have
\begin{gather*}
L_{1,1} = L^*_{1,1} = X_{1,1} \, , \qquad\quad
L_{1,2} = L^*_{1,2} = X_{1,1} + X_{1,2} \, , \qquad\quad
L_{2,1} = L^*_{2,1} = X_{1,1} + X_{2,1} \, , \\
L_{2,2} = X_{1,1} + \max(X_{1,2},X_{2,1}) + X_{2,2} \, , \qquad\quad
L^*_{2,2} = X_{2,1} + \max(X_{1,1},X_{2,2}) + X_{1,2} \, .
\end{gather*}
It immediately follows that
\begin{align*}
L_{2,2} - L_{1,2} - L_{2,1} + L_{1,1}
&= X_{2,2} - \min(X_{1,2},X_{2,1}) \, , \\
L^*_{2,2} - L^*_{1,2} - L^*_{2,1} + L^*_{1,1}
&= \max(0,X_{2,2}-X_{1,1}) \, .
\end{align*}
As by hypothesis the $X_{i,j}$'s are non-deterministic, there exists $t\in\R$ such that their (common) cumulative distribution function $F$ satisfies $0<F(t)<1$.
We then have, by independence of the $X_{i,j}$'s, that
\[
\begin{split}
\P(L_{2,2} - L_{1,2} - & L_{2,1} + L_{1,1} <0)
= \P(X_{2,2} < \min(X_{1,2},X_{2,1})) \\
\geq \, &\P(X_{2,2} \leq t, \, X_{1,2} > t, \, X_{2,1} > t)
= F(t) (1-F(t))^2 >0 \, .
\end{split}
\]
On the other hand,
\[
\P(L^*_{2,2} - L^*_{1,2} - L^*_{2,1} + L^*_{1,1} <0)
= \P(\max(0,X_{2,2}-X_{1,1})<0) =0 \, .
\]
It follows that $L_{2,2} - L_{1,2} - L_{2,1} + L_{1,1}$ and $L^*_{2,2} - L^*_{1,2} - L^*_{2,1} + L^*_{1,1}$ are not equally distributed.
In particular, $L$ and $L^*$ do not follow the same joint law.
\end{proof}

The main result of this section is that certain distributional identities between LPP and dual LPP do hold as long as the common distribution of the weights is geometric or exponential:

\begin{thm}
\label{thm:UpDownLPP}
Let $X$ be a Young tableau of shape $\lambda$ with i.i.d.\ geometric or i.i.d.\ exponential weights.
Then the border strip entries (and in particular the corner entries) of the corresponding LPP and dual LPP tableaux $L$ and $L^*$ have the same joint distribution.
\end{thm}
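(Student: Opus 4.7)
The plan is to prove the i.i.d.\ geometric case and deduce the exponential case by a standard limiting argument: taking weights geometric with parameter $q$ and rescaling by $1-q$ produces, in the limit $q\to 1^-$, i.i.d.\ $\mathrm{Exp}(1)$ weights, and the joint distributional identity along $\Fro$ passes to the limit because LPP times are continuous functions of the weights on a finite region.

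For the geometric case, I would realize both $L$ and $L^*$ through the RSK and Burge correspondences applied to $X$ restricted to the nested rectangles $R_{i,j}=[1,i]\times[1,j]$. By Greene's theorem, the first part of the shape of the RSK output on $X|_{R_{i,j}}$ equals $L_{i,j}$. The Burge correspondence, which uses a different insertion convention but realizes the same Cauchy identity $\sum_\nu s_\nu(x)s_\nu(y)=\prod_{i,j}(1-x_iy_j)^{-1}$, admits an analogous Greene-type description yielding $L^*_{i,j}$ as the first part of the shape of the Burge output on $X|_{R_{i,j}}$.

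The heart of the proof is to upgrade these univariate identities to the joint identity along the border strip. The key inputs are that (a) under the i.i.d.\ geometric law, both RSK and Burge push the weight measure forward to the same Schur measure on pairs of tableaux, and (b) the two correspondences are related by an explicit combinatorial duality (a reflection on the matrix paired with a measure-preserving involution on tableau pairs). The border strip hypothesis plays an essential role: each $(i,j)\in\Fro$ lies on a distinct diagonal of $\lambda$, and the rectangles $R_{i,j}$ form a monotone chain along $\Fro$ in which consecutive rectangles differ by a single row or column. This structural property allows the joint family $(\kappa_{i,j})_{(i,j)\in\Fro}$ of RSK shapes --- and likewise the joint family of Burge shapes --- to be reconstructed coherently from a single application of RSK (resp.\ Burge) to the full matrix $X$ on $\lambda$, via Fomin's local growth rules. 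The duality then matches the two joint distributions, and extracting first parts gives the claim.

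I expect the main obstacle to be precisely this last equidistribution step. The marginal identity $L_{i,j}\overset{D}{=}L^*_{i,j}$ is elementary, and the Schur-measure identity on final shapes is classical, but promoting to a joint identity along $\Fro$ genuinely uses the RSK--Burge duality; moreover, Proposition~\ref{prop:LPP&dualLPP_joint} shows that the analogous joint identity over \emph{all} $(i,j)\in\lambda$ fails, so any successful argument must leverage the border strip hypothesis in an essential way.
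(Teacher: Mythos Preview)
Your overall architecture --- RSK and Burge plus Greene's theorem to identify $L_{i,j}$ and $L^*_{i,j}$ for $(i,j)\in\Fro$, prove the geometric case, then pass to the exponential limit --- matches the paper. But the heart of your argument, the ``joint equidistribution along $\Fro$'' step, is both vaguer and more complicated than necessary, and you have misdiagnosed where the border strip hypothesis enters.

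The paper does not attempt to prove equidistribution only along $\Fro$. It proves the much stronger statement that $\RSK(X)\overset{D}{=}\Burge(X)$ as \emph{full} interlacing tableaux of shape $\lambda$ (Lemma~\ref{lem:RSK=Burge}). The argument is nearly trivial once one has the right fact: for both bijections, the sum $\sum_{(i,j)\in\lambda}x_{i,j}$ of input entries equals the \emph{same} signed linear combination $\sum_{(i,j)\in\lambda}\omega_{i,j}r_{i,j}=\sum_{(i,j)\in\lambda}\omega_{i,j}b_{i,j}$ of output entries (this is the $k=\min(m,n)$ case of Greene's theorem, applied along each diagonal). Under i.i.d.\ geometric weights, $\P(X=y)$ depends only on $\sum y_{i,j}$, so $\P(\RSK(X)=t)=\P(\Burge(X)=t)$ for every interlacing tableau $t$. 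No Schur measures, no reflection/involution duality, no Fomin growth rules are needed. The border strip hypothesis plays no role whatsoever in this equidistribution; it enters only afterwards, because the $k=1$ case of Greene's theorem identifies $\RSK(X)_{m,n}=L_{m,n}$ and $\Burge(X)_{m,n}=L^*_{m,n}$ \emph{only} when $(m,n)\in\Fro$. (This also resolves the apparent tension with Proposition~\ref{prop:LPP&dualLPP_joint}: for $(i,j)\notin\Fro$ one has $\RSK(X)_{i,j}\neq L_{i,j}$ in general, so the full-tableau equidistribution of $\RSK(X)$ and $\Burge(X)$ does not contradict $L\not\overset{D}{=}L^*$.)

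Your proposed route --- Schur measures on each rectangle, a reflection duality, and coherent reconstruction via local rules --- might be salvageable, but as written it is not a proof: you never say what identity the ``duality'' actually gives you jointly across $\Fro$, and the phrase ``matches the two joint distributions'' is doing all the work without justification. Replace this with the sum-preservation argument above and the proof becomes two lines.
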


Theorem~\ref{thm:main-thm1} immediately follows from Theorem~\ref{thm:UpDownLPP} applied to tableaux of staircase shape $(n-1,n-2,\dots,1)$, since in this case the coordinates of $\bm{V}_n$ and $\bm{W}_n$ are precisely the corner entries of $L$ and $L^*$, respectively.

\begin{rem}
In a similar vein to how Proposition~\ref{prop:LPP&dualLPP_joint} illustrates the limits of what types of identities in distribution might be expected to hold, 
note as well that, in general, Theorem~\ref{thm:UpDownLPP} fails to hold if the weights are not geometric nor exponential.
For example, consider the square shape $\lambda=(2,2)$ and assume the $X_{i,j}$'s are uniformly distributed on $\{0,1\}$.
Then, we have that
\[
\P(L_{1,2}=2, \, L_{2,2}=3, \, L_{2,1}=1)
= \P(X_{1,1}=X_{1,2}=X_{2,2}=1, \, X_{2,1}=0)
= 2^{-4} \, ,
\]
but
\[
\P(L^*_{1,2}=2, \, L^*_{2,2}=3, \, L^*_{2,1}=1)
=0 \, .
\]
Thus $L$ and $L^*$, even when restricted to the border strip $\Fro = \{(2,1),(2,2),(1,2)\}$ of $\lambda$, are not equally distributed.
\end{rem}

\subsection{RSK and Burge correspondences}
\label{subsec:RSK&Burge}

We will prove Theorem~\ref{thm:UpDownLPP} via an extended version of two celebrated combinatorial maps, the Robinson--Schensted--Knuth and Burge correspondences, acting on arrays of arbitrary shape $\lambda$.

We denote by $\tab_{\Z_{\geq 0}}(\lambda)$ the set of tableaux of shape $\lambda$ with non-negative integer entries, and by $\interlacingtab_{\Z_{\geq 0}}(\lambda)$ the subset of interlacing tableaux, in the sense of~\eqref{eq:interlacing}.
Let $\Pi^{(k)}_{m,n}$ be the set of all unions of $k$ disjoint non-intersecting directed lattice paths $\pi_1, \dots, \pi_k$ with $\pi_i$ starting at $(1,i)$ and ending at $(m,n-k+i)$.
Similarly, let $\Pi^{*(k)}_{m,n}$ be the set of all unions of $k$ disjoint non-intersecting directed lattice paths $\pi_1,\dots,\pi_k$ with $\pi_i$ starting at $(m,i)$ and ending at $(1,n-k+i)$.
\begin{thm}[\cite{bisiOConnellZygouras20, greene74, krattenthaler06}]
\label{thm:RSK}
Let $\lambda$ be a Young diagram with border strip $\Fro$.
There exist two bijections 
\begin{align*}
\RSK \colon \tab_{\Z_{\geq 0}}(\lambda)&\to \interlacingtab_{\Z_{\geq 0}}(\lambda)\, ,
& x = \{x_{i,j} \colon (i,j)\in \lambda\}
&\xmapsto{\RSK}
r = \{r_{i,j} \colon (i,j)\in \lambda\} \, , \\
\Burge \colon\tab_{\Z_{\geq 0}}(\lambda)&\to \interlacingtab_{\Z_{\geq 0}}(\lambda)\, ,
& x = \{x_{i,j} \colon (i,j)\in \lambda\}
&\xmapsto{\,\Burge\,}
b = \{b_{i,j} \colon (i,j)\in \lambda\} \, ,
\end{align*}
called the Robinson--Schensted--Knuth and Burge correspondences, that are characterized (in fact defined) by the following relations:
for any $(m,n)\in \Fro$ and $1\leq k\leq \min(m,n)$,
\begin{align}
\label{eq:RSK_Greene}
\sum_{i=1}^k r_{m-i+1,n-i+1}
&= \max_{\pi \in\Pi^{(k)}_{m,n}} \sum_{(i,j)\in \pi} x_{i,j} \, , \\ \label{eq:Burge_Greene}
\sum_{i=1}^k b_{m-i+1,n-i+1}
&= \max_{\pi \in\Pi^{*(k)}_{m,n}} \sum_{(i,j)\in \pi} x_{i,j} \, .
\end{align}
\end{thm}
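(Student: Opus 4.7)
The plan is to deduce Theorem~\ref{thm:UpDownLPP} from the stronger distributional equality $\RSK(X)\overset{D}{=}\Burge(X)$ of the full output tableaux in $\interlacingtab_{\Z_{\geq 0}}(\lambda)$ (and its continuous analogue in the exponential setting). First, the specialisation $k=1$ of the Greene-type characterisations~\eqref{eq:RSK_Greene}--\eqref{eq:Burge_Greene} yields $r_{m,n}=L_{m,n}$ and $b_{m,n}=L^*_{m,n}$ for every border box $(m,n)\in\Fro$. Consequently the restrictions of $\RSK(X)$ and $\Burge(X)$ to $\Fro$ are exactly the $\Fro$-indexed random vectors of LPP and dual LPP times appearing in the statement, so equality in distribution of the full tableaux is strictly stronger than what is needed and implies it as a marginal consequence.

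For i.i.d.\ $\operatorname{Geom}(q)$ weights, the law of $X$ on $\tab_{\Z_{\geq 0}}(\lambda)$ is $\P(X=x)=(1-q)^{|\lambda|}q^{|x|}$, where $|x|:=\sum_{(i,j)\in\lambda}x_{i,j}$. Because $\RSK$ and $\Burge$ are both bijections $\tab_{\Z_{\geq 0}}(\lambda)\to\interlacingtab_{\Z_{\geq 0}}(\lambda)$, the pushforward measures assign to each $T$ the masses $(1-q)^{|\lambda|}q^{|\RSK^{-1}(T)|}$ and $(1-q)^{|\lambda|}q^{|\Burge^{-1}(T)|}$, respectively. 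The heart of the proof is therefore the identity
\[
|\RSK^{-1}(T)|=|\Burge^{-1}(T)|\qquad\text{for every }T\in\interlacingtab_{\Z_{\geq 0}}(\lambda),
\]
asserting that both $\RSK$ and $\Burge$ realise $|x|$ as the \emph{same} explicit $\Z$-linear functional of the output tableau. I would derive this by combining~\eqref{eq:RSK_Greene}--\eqref{eq:Burge_Greene} at $k=\min(m,n)$ with a decomposition of $\lambda$ into rectangular blocks indexed by its corners: for a corner $(m,n)$ with $k=\min(m,n)$, a maximal family of $k$ non-intersecting paths (of either type) covers the sub-rectangle $[1,m]\times[n-k+1,n]\cap\lambda$, and the resulting equalities splice together by inclusion--exclusion into one formula for $|x|$ in terms of the diagonal entries of $T$. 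The key point is that this formula depends only on the shape $\lambda$ and not on which correspondence was used, because the relevant path endpoints are symmetric under the reflection across the horizontal midline that exchanges the two models.

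To transfer to the exponential setting, I would use the standard weak limit $(1-q)X^{(q)}\xrightarrow{\mathrm{d}}\operatorname{Exp}(1)$ as $q\uparrow 1$ together with the positive homogeneity of LPP and dual LPP in the weights, which converts the equality in distribution for $\operatorname{Geom}(q)$ into one for $\operatorname{Exp}(1)$. Alternatively, one may run the same argument intrinsically in the continuous setting, using piecewise-linear volume-preserving extensions of $\RSK$ and $\Burge$ on $(\Rnonneg)^{\lambda}$ together with the density $e^{-|x|}$ of the i.i.d.\ exponential measure.

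The hard part is the sum-preservation identity $|\RSK^{-1}(T)|=|\Burge^{-1}(T)|$ in the geometric step. Greene's theorem only gives identities along diagonals terminating at border boxes, so for a general (non-rectangular) $\lambda$ one must show that these local identities combine consistently into a single expression for $|x|$ that is independent of the chosen correspondence. This is exactly the ``RSK/Burge duality'' alluded to after Theorem~\ref{thm:main-thm1}, and it is what drives the result beyond the trivial equality of one-dimensional marginals.
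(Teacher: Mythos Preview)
Your proposal does not address the statement you were asked to prove. The target is Theorem~\ref{thm:RSK}, which asserts the \emph{existence} of the two bijections $\RSK,\Burge\colon\tab_{\Z_{\geq 0}}(\lambda)\to\interlacingtab_{\Z_{\geq 0}}(\lambda)$ together with their Greene-type characterisations~\eqref{eq:RSK_Greene}--\eqref{eq:Burge_Greene}. What you have written is instead a proof plan for Theorem~\ref{thm:UpDownLPP}, which \emph{uses} Theorem~\ref{thm:RSK} as a black box. You take~\eqref{eq:RSK_Greene}--\eqref{eq:Burge_Greene} for granted from the first line and never construct or verify the bijections themselves.

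In the paper, Theorem~\ref{thm:RSK} is not proved from scratch: it is imported from the literature (Greene, Krattenthaler, Bisi--O'Connell--Zygouras), and the only argument offered is Appendix~\ref{app:RSK_Burge}, which explains how to translate Krattenthaler's framework of $D$-$R$-sequences and sequences of partitions $(\mu^0,\dots,\mu^k)$ into the tableau language used here. A proof attempt for Theorem~\ref{thm:RSK} would therefore need to either reproduce that translation or give an independent construction of the bijections and verify the Greene relations; your write-up does neither.

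As a side remark, the strategy you outline is essentially the paper's own route to Theorem~\ref{thm:UpDownLPP}: the sum-preservation identity you isolate is exactly~\eqref{eq:sumTableau}, its consequence $\RSK(X)\overset{D}{=}\Burge(X)$ for geometric weights is Lemma~\ref{lem:RSK=Burge}, and the passage to exponential weights via a scaling limit (with the volume-preserving continuous alternative mentioned as well) matches the proof of Theorem~\ref{thm:UpDownLPP} and the remark following it. So the content is sound, just attached to the wrong theorem.
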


The RSK correspondence was introduced by Robinson, Schensted, and Knuth --- see the classic paper~\cite{knuth70} as well as the modern presentation in~\cite[\S~7.11]{stanley99}.
The Burge correspondence is one of the bijections presented in~\cite{burge74} --- see also~\cite[App.~A]{fulton97}.
In the usual setting, both these maps are regarded as bijections between non-negative integer matrices $x$ and a pair $(P,Q)$ of semistandard Young tableaux of the same shape.
They are defined, respectively, in terms of \emph{row insertion} and \emph{column insertion}, two combinatorial algorithms that `insert' a given positive integer into a given semistandard Young tableau, yielding a new semistandard Young tableau with one extra box --- see~\cite[\S~1.1 and~A.2]{fulton97}.

Theorem~\ref{thm:RSK} presents the RSK and Burge correspondences, in a somewhat untraditional way, as bijections between tableaux and interlacing tableaux with non-negative integer entries.
This generalization goes through an alternative construction of these maps in terms of $(\max, \min,+,-)$ operations on the elements of the input tableau, as described in~\cite[\S~2]{bisiOConnellZygouras20} (therein, the bijections are further extended to tableaux with real entries).
Relations~\eqref{eq:RSK_Greene}-\eqref{eq:Burge_Greene} can be then regarded as an extension of so-called Greene's theorem~\cite{greene74}.
The paper of Krattenthaler~\cite{krattenthaler06} contains all the details of the constructions leading to Theorem~\ref{thm:RSK}, even though expressed in a slightly different language.
For the reader's convenience we translate the results of~\cite{krattenthaler06} into our setting in Appendix~\ref{app:RSK_Burge}.

For the proof of Theorem~\ref{thm:UpDownLPP}, we will be  using the extremal cases $k=1$ and $k=\min(m,n)$ of~\eqref{eq:RSK_Greene} and~\eqref{eq:Burge_Greene}.

The case $k=1$ explains the connection between the outputs of the RSK (respectively, Burge) correspondence and the LPP (respectively, dual LPP) times.
More precisely, we have that
\begin{equation}
\label{eq:LPP_RSK_Burge}
r_{m,n}
= \max_{\pi\colon (1,1) \to (m,n)} \sum_{(i,j)\in \pi} x_{i,j}
\qquad \text{and} \qquad
b_{m,n}
= \max_{\pi\colon (m,1) \to (1,n)} \sum_{(i,j)\in \pi} x_{i,j} \, ,
\end{equation}
for all $(m,n)$ on the border strip $\Fro$ on $\lambda$.

On the other hand, taking $k=\min(m,n)$ in Theorem~\ref{thm:RSK}, it is easy to see that the maxima in~\eqref{eq:RSK_Greene} and~\eqref{eq:Burge_Greene} become both equal to the \emph{same} `rectangular sum' ${\rm Rec}_{m,n}(x)$ of inputs:
\begin{equation}
\label{eq:RSK_Burge_type}
\sum_{\substack{(i,j)\in\lambda, \\ j-i = n-m}} r_{i,j}
= \sum_{\substack{(i,j)\in\lambda, \\ j-i = n-m}} b_{i,j}
= \sum_{i=1}^m \sum_{j=1}^n x_{i,j}
=: {\rm Rec}_{m,n}(x) \, .
\end{equation}
Let now $(m_1,n_1),\dots, (m_l,n_l)$ be the corners of a partition $\lambda$, ordered so that $m_1 > \dots > m_l$ and $n_1 < \dots < n_l$.
Then, \eqref{eq:RSK_Burge_type} holds for $(m,n)=(m_k,n_k)$ and, if $k>1$, also for $(m,n)=(m_k,n_{k-1})$ (both are border boxes by construction).
It is then clear that the `global sum' of the tableau $x$ can be expressed as a linear combination with integer coefficients of `rectangular sums'~\eqref{eq:RSK_Burge_type}; specifically, we have the representation
\[
\sum_{(i,j)\in \lambda} x_{i,j}
= {\rm Rec}_{m_1,n_1}(x) + \sum_{k=2}^l \left[{\rm Rec}_{m_k,n_k}(x) - {\rm Rec}_{m_k,n_{k-1}}(x)\right] \, .
\]
We thus deduce a fact crucial for our purposes: for any shape $\lambda$ with corners $(m_1,n_1),\dots, (m_l,n_l)$ as above, define $\{\omega_{i,j}\colon (i,j)\in \lambda\}$ by setting
\[
\omega_{i,j} :=
\begin{cases}
+1 &\text{if there exists $k$ such that } j-i = n_k - m_k \, , \\
-1 &\text{if there exists $k$ such that } j-i = n_{k-1} - m_k \, , \\
0 &\text{otherwise.}
\end{cases}
\]
We then have that
\begin{equation}
\label{eq:sumTableau}
\sum_{(i,j)\in \lambda} \omega_{i,j} r_{i,j}
= \sum_{(i,j)\in \lambda} x_{i,j}
= \sum_{(i,j)\in \lambda} \omega_{i,j} b_{i,j}
\end{equation}
for all $x\in\tab_{\Z_{\geq 0}}(\lambda)$, where $r:= \RSK(x)$ and $b:=\Burge(x)$.

\begin{ex}
\label{ex:RSK_Burge}
In Fig.~\ref{fig:RSK-Burge} we give a reference example of the RSK and Burge maps. 
\begin{figure}[t!]
\centering
{\includegraphics[width=25mm]{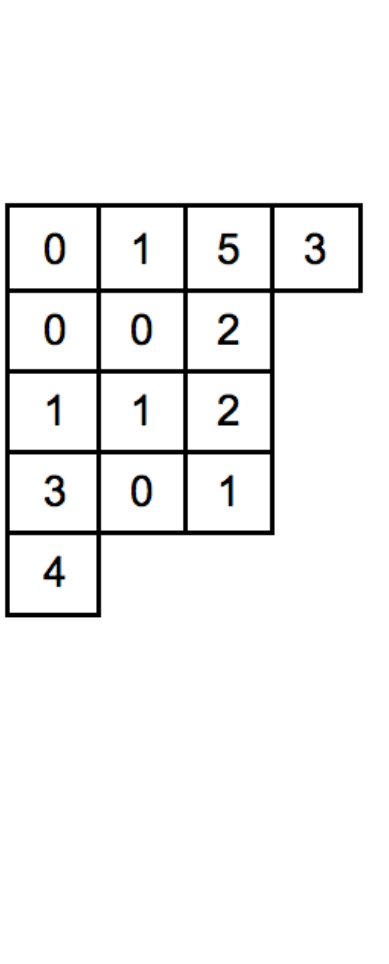}}
\hspace{0.5cm} \raisebox{100pt}{$\begin{matrix}\xmapsto{\ \RSK \ } \\ \\   \\  \\ \xmapsto{\ \Burge \ } \end{matrix}$} \hspace{0.7cm}
{\includegraphics[width=25mm]{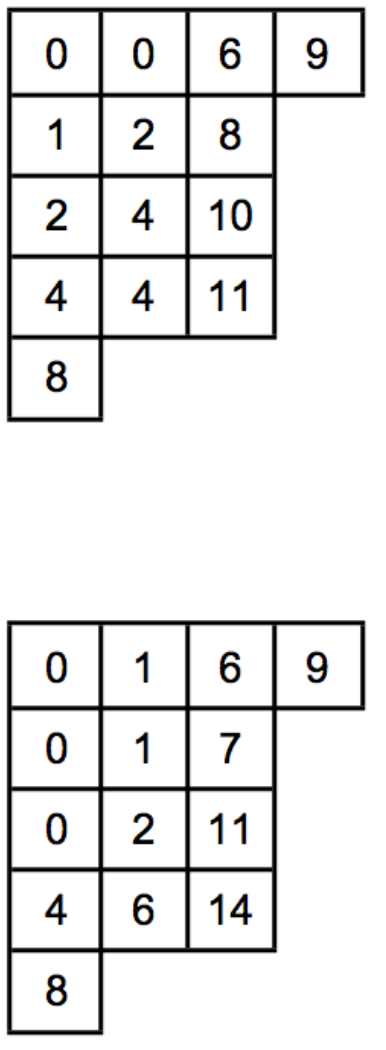}}
\caption{Illustration of the RSK and Burge correspondences.}
\label{fig:RSK-Burge}
\end{figure}
The input is a tableau $x\in \tab_{\Z_{\geq 0}}(\lambda)$ with $\lambda = (4,3,3,3,1)$.
The two outputs are the interlacing tableaux $r=\RSK(x)$ and $b=\Burge(x)\in\interlacingtab_{\Z_{\geq 0}}(\lambda)$.
One can easily verify the identities~\eqref{eq:LPP_RSK_Burge}-\eqref{eq:RSK_Burge_type}.
For instance, taking the box $(3,3)$ in the border strip $\Fro$ of $\lambda$, we have
\[
{\rm Rec}_{3,3}(x)=r_{3,3}+r_{2,2}+r_{1,1}=b_{3,3}+b_{2,2}+b_{1,1}=12 \, .
\]
For this shape, we have $\omega_{i,j} = 1$ when $(i,j)\in \{ (5,1), (4,3), (3,2), (2,1), (1,4) \}$; $\omega_{i,j}=-1$ when $(i,j)\in \{ (4,1), (1,3)\}$; and $\omega_{i,j}=0$ otherwise.
\end{ex}

\subsection{Equidistribution of random RSK and Burge tableaux}
\label{subsec:proofUpDownLPP}

We now formulate as a lemma the key identity in the proof of Theorem~\ref{thm:UpDownLPP}.
In a broad sense, we will say that a random variable $G$ is geometrically distributed (with \emph{support} $\Z_{\geq k}$, for some integer $k\geq 0$, and \emph{parameter} $p\in (0,1)$) if
\[
\P(G=m) = p(1-p)^{m-k}
\qquad
\text{for all $m\in \Z_{\geq k}$.}
\]

\begin{lem}
\label{lem:RSK=Burge}
If $X$ is a random tableau of shape $\lambda$ with i.i.d.\ geometric entries, then
\begin{equation}
\RSK(X)\overset{D}{=} \Burge(X) \, .
\label{eq:RSK=Burge}
\end{equation}
\end{lem}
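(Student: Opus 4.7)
The plan is to compute the probability mass function of each side of \eqref{eq:RSK=Burge} at an arbitrary interlacing tableau $t$ and observe that the two agree as a direct consequence of the identity \eqref{eq:sumTableau}. The key conceptual point is that the i.i.d.\ geometric measure has a multiplicative structure that depends only on the total sum of the entries, and \eqref{eq:sumTableau} says that both $\RSK$ and $\Burge$ convert this total sum into the \emph{same} linear functional $\omega$ of the output tableau.

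Concretely, I would take the entries of $X$ to have support $\Z_{\geq 0}$ and parameter $p$ (the shifted-support case $\Z_{\geq k}$ reduces to this by translating all entries by $-k$, which acts equivariantly on both bijections). The joint pmf of $X$ then factorizes as
$$\P(X = x) = \prod_{(i,j) \in \lambda} p(1-p)^{x_{i,j}} = p^{|\lambda|}(1-p)^{S(x)},$$
where $S(x) := \sum_{(i,j) \in \lambda} x_{i,j}$. In particular, $\P(X = x)$ depends on $x$ only through $S(x)$. Since $\RSK$ and $\Burge$ are bijections $\tab_{\Z_{\geq 0}}(\lambda) \to \interlacingtab_{\Z_{\geq 0}}(\lambda)$ by Theorem~\ref{thm:RSK}, for any interlacing tableau $t$ we have
$$\P(\RSK(X) = t) = \P(X = \RSK^{-1}(t)), \qquad \P(\Burge(X) = t) = \P(X = \Burge^{-1}(t)).$$
The identity \eqref{eq:sumTableau} now finishes the argument: it asserts that both $\RSK^{-1}(t)$ and $\Burge^{-1}(t)$ have total sum equal to $\sum_{(i,j)} \omega_{i,j} t_{i,j}$, so substituting into the factorization above yields
$$\P(\RSK(X) = t) = p^{|\lambda|}(1-p)^{\sum_{(i,j)} \omega_{i,j} t_{i,j}} = \P(\Burge(X) = t),$$
as desired.

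There is essentially no further obstacle, because all the combinatorial content has been packaged into \eqref{eq:sumTableau}, which itself comes from the extremal case $k = \min(m,n)$ of \eqref{eq:RSK_Greene}--\eqref{eq:Burge_Greene}. The conceptual subtlety worth flagging is the non-trivial fact that the \emph{same} linear functional $\omega$ (weighted by $\pm 1$ along particular diagonals and $0$ elsewhere) works simultaneously for both $\RSK$ and $\Burge$: two genuinely different bijections on $\tab_{\Z_{\geq 0}}(\lambda)$ preserve the same ``total weight'' functional on the output side, and this is precisely what makes the geometric measure push forward identically under them.
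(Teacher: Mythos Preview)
Your proposal is correct and follows essentially the same argument as the paper: compute the pmf of $\RSK(X)$ and $\Burge(X)$ at an arbitrary $t$ via the bijections, use that the geometric pmf depends only on the total sum, and invoke \eqref{eq:sumTableau} to see that both preimages have the same sum $\sum_{(i,j)}\omega_{i,j}t_{i,j}$. The paper handles the shifted-support case by exhibiting the explicit shift formulas $\RSK(X+k)_{i,j}=\RSK(X)_{i,j}+(i+j-1)k$ (and likewise for $\Burge$), which is exactly the equivariance you invoke.
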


\begin{proof}
Assume first that $X$ has i.i.d.\ geometric entries with support $\Z_{\geq 0}$ and any parameter $p\in (0,1)$.
Fix a tableau $t\in\interlacingtab_{\Znonneg}(\lambda)$ and let $y:=\RSK^{-1}(t)$ and $z:=\Burge^{-1}(t)$.
It then follows from~\eqref{eq:sumTableau} that
\[
\begin{split}
\P(\RSK(X)=t)
= \P(X=y)
&= p^{\abs{\lambda}} (1-p)^{\sum_{(i,j)\in\lambda} y_{i,j}}
= p^{\abs{\lambda}} (1-p)^{\sum_{(i,j)\in\lambda} \omega_{i,j} t_{i,j}} \\
&= p^{\abs{\lambda}} (1-p)^{\sum_{(i,j)\in\lambda} z_{i,j}}
= \P(X=z)
= \P(\Burge(X)=t) \, ,
\end{split}
\]
where $\abs{\lambda} := \sum_{i\geq 1} \lambda_i$ is the size of $\lambda$.
This proves that $\RSK(X)$ and $\Burge(X)$ are equal in distribution.

The proof in the case of tableaux with i.i.d.\ geometric entries with support in $\Z_{\geq k}$, $k\geq 0$, follows immediately from the following observation: if we shift all the entries of a tableau by a constant $k$, i.e.\ set $Y_{i,j} := X_{i,j} +k$, then from~\eqref{eq:RSK_Greene}-\eqref{eq:Burge_Greene} we have
\begin{align*}
\RSK(Y)_{i,j} &= \RSK(X)_{i,j} +(i+j-1)k \, , \\
\Burge(Y)_{i,j} &= \Burge(X)_{i,j} +(i+j-1)k \, . \qedhere
\end{align*}
\end{proof}

By combining this lemma with~\eqref{eq:LPP_RSK_Burge}, we derive the announced conclusion.
\begin{proof}[Proof of Theorem~\ref{thm:UpDownLPP}]
Fix a partition $\lambda$  with border strip $\Fro$.
Let $X$ be a random tableau of shape $\lambda$, and denote by $L$ and $L^*$ the corresponding LPP and dual LPP tableaux, respectively.
Let $\RSK(X)$ and $\Burge(X)$ be the (random) images of $X$ under the RSK and Burge correspondences, respectively.
By~\eqref{eq:LPP_RSK_Burge} we have the exact (not only distributional!) equalities 
\[
\RSK(X)_{m,n} = L_{m,n}\quad\text{and}\quad \Burge(X)_{m,n} = L^*_{m,n},\qquad\text{ for all $(m,n)\in\Fro$.}
\]

Assume first that the entries of $X$ are i.i.d.\ geometric variables, so that $X$ takes values in $\tab_{\Z_{\geq 0}}(\lambda)$.
By Lemma~\ref{lem:RSK=Burge}, $\RSK(X)$ and $\Burge(X)$ have the same joint distribution.
It follows that the restrictions of the LPP and dual LPP tableaux to the border strip, namely $\RSK(X)|_\Fro=L|_\Fro$ and $\Burge(X)|_\Fro=L^*|_\Fro$, are also equal in distribution.

Suppose now that $X$ has i.i.d.\ exponential entries of rate $\alpha$.
We have the convergence $\epsilon X^{(\epsilon)} \xrightarrow{\epsilon\downarrow 0} X$ in law, where $X^{(\epsilon)}$ is a random tableau with i.i.d.\ geometric entries with parameter $p=1-\e^{-\epsilon \alpha}$ (any support $\Z_{\geq k}$ works).
Denote by $L^{(\epsilon)}$ and $L^{*(\epsilon)}$ the LPP and dual LPP tableaux, respectively, corresponding to the input tableau $X^{(\epsilon)}$.
It is then immediate to see from the definition that $\epsilon L^{(\epsilon)}$ and $\epsilon L^{*(\epsilon)}$ are the LPP and dual LPP tableaux, respectively, corresponding to $\epsilon X^{(\epsilon)}$.
Since both the LPP and dual LPP tableaux are continuous functions of the input tableau, we deduce from the continuous mapping theorem (see~\cite[Theorem~3.2.10]{durrett19}) that
\[
\epsilon L^{(\epsilon)} \xrightarrow{\epsilon\downarrow 0} L
\qquad\quad \text{and} \qquad\quad
\epsilon L^{*(\epsilon)} \xrightarrow{\epsilon\downarrow 0} L^*
\]
in law.
As the claim has already been proven for geometric weights, we know that $L^{(\epsilon)}|_\Fro \overset{D}{=} L^{*(\epsilon)}|_\Fro$.
It follows that $L|_\Fro \overset{D}{=} L^*|_\Fro$, as required.
\end{proof}

\begin{rem}
It is possible to extend Theorem~\ref{thm:RSK} to view the RSK and Burge correspondences as acting on tableaux with \emph{real}, instead of integer, entries; see~\cite[\S~2]{bisiOConnellZygouras20} for the construction.
Viewed as real functions, these bijections turn out to be volume-preserving (i.e.\ their Jacobians are both of modulus 1 almost everywhere).
Using this property, the argument used to prove Lemma~\ref{lem:RSK=Burge} can then be adapted to establish the distributional equality between $\RSK(X)$ and $\Burge(X)$ also when the input tableau $X$ has exponential i.i.d.\ entries.
The proof of Theorem~\eqref{thm:UpDownLPP} in the exponential case would then be akin to the geometric case, with no need to take a scaling limit.
\end{rem}

\begin{rem}
Let $X$ be a random tableau of shape $\lambda$.
The proof of Lemma~\ref{lem:RSK=Burge} suggests a \emph{sufficient} condition on the joint distribution of $X$ in order for~\eqref{eq:RSK=Burge} (and, hence, Theorem~\ref{thm:UpDownLPP}) to hold.
Such a condition is the property that the $\P(X=y) = \P(X=z)$ whenever $y,z\in \tab_{\Z_{\geq 0}}(\lambda)$ have equal global sum, i.e.\ $\sum_{(i,j)\in\lambda} y_{i,j} = \sum_{(i,j)\in\lambda} z_{i,j}$.
If we further assume the entries of $X$ to be independent, this property forces the entries of $X$ to be i.i.d.\ with a geometric distribution.
The latter claim follows from the fact that, if $f,g_1,\dots,g_k$ are probability mass functions on $\Z_{\geq 0}$ such that $g_1(x_1)g_2(x_2) \cdots g_k(x_k)$ is proportional to $f(x_1+\dots +x_k)$ for all $x_1,\dots,x_k\in \Z_{\geq 0}$, then $f,g_1,\dots,g_k$ are necessarily all geometric with the same parameter.
\end{rem}

\section{From a probabilistic to a combinatorial conjecture}
\label{sec:comb-iden}

In this section we reformulate Conjecture~\ref{main-conj} by showing its equivalence to Conjecture~\ref{main-conj-reformulated}.
We start by discussing the two families of combinatorial objects and defining the relevant associated quantities appearing in identity~\eqref{eq:comb-iden}.

\subsection{Staircase shape Young tableaux}
\label{subsec:staircaseTableaux}
Let $\staircase_n$ denote the partition $(n-1,n-2,\ldots,1)$ of $N=n(n-1)/2$; as a Young diagram we will refer to $\staircase_n$ as the \firstmention{staircase shape of order~$n$}.
Let $\syt(\staircase_n)$ denote the set of standard Young tableaux of shape $\staircase_n$.
We associate with each $t\in\syt(\staircase_n)$ several parameters, which we denote by $\diag_t$, $\sigma_t$, $\deg_t$, and $f_t$.
(Note: these definitions are somewhat technical; refer to Example~\ref{ex:syt6} below for a concrete illustration that makes them easier to follow.)

First, we define
\[
\diag_t := (t_{n-1,1}, t_{n-2,2}, \ldots, t_{1,n-1})
\]
to be the vector of corner entries of~$t$ read from bottom-left to top-right. 
Second, we define $\sigma_t\in \sym_{n-1}$ to be the permutation  encoding the ordering of the entries of $\diag_t$, so that $\diag_t(j) < \diag_t(k)$ if and only if $\sigma_t(j) < \sigma_t(k)$ for all $j,k$.
The vector $\diagsorted_t$ will denote the increasing rearrangement of $\diag_t$, so that $\diagsorted_t(k):=\diag_t(\sigma_t^{-1}(k))$ for all~$k$.
For later convenience we also adopt the notational convention that  $\diagsorted_t(0)=0$.

Notice that a tableau $t \in \syt(\staircase_n)$ encodes a growing sequence
\begin{equation}
\label{eq:diagseq}
\emptyset = \lambda^{(0)} \nearrow \lambda^{(1)}
\nearrow \lambda^{(2)} \nearrow \ldots \nearrow \lambda^{(N)}=
\staircase_n
\end{equation}
of Young diagrams that starts from the empty diagram, ends at $\staircase_n$, and such that each $\lambda^{(k)}$ is obtained from $\lambda^{(k-1)}$ by adding the box $(i,j)$ for which $t_{i,j}=k$.
We then define the vector $\deg_t = (\deg_t(0),\ldots,\deg_t(N-1))$, where $\deg_t(k)$ is the number of boxes $(i,j)\in \staircase_n / \lambda^{(k)}$ such that $\lambda^{(k)} \cup \{(i,j)\}$ is a Young sub-diagram of $\staircase_n$.
We may interpret $\deg_t(k)$ as the out-degree of $\lambda^{(k)}$ regarded as a vertex of the directed graph $\young(\staircase_n)$ of Young diagrams contained in $\staircase_n$ (a sublattice of the \firstmention{Young graph}, or \firstmention{Young lattice}, $\mathcal{Y}$), with edges corresponding to the box-addition relation $\mu \nearrow \lambda$; see Fig.~\ref{subfig:youngGraph}.

Notice that the randomly growing Young diagram model introduced in Subsection~\ref{subsec:models} is nothing but a continuous-time simple random walk on $\young(\staircase_n)$ that starts from the empty diagram (and necessarily ends at $\staircase_n$).
Let $T$ be the (random) standard Young tableau that encodes the path of such a random walk, i.e.\ the associated sequence of growing diagrams~\eqref{eq:diagseq}; then,
\begin{equation}
\label{eq:deg_t}
\P(\nobreak{T=t}) = \prod_{j=0}^{N-1} \frac{1}{\deg_t(j)}
\qquad
\text{for all } t\in\syt(\staircase_n) \, .
\end{equation}

Finally, we define the \firstmention{generating factor} of~$t$ as the rational function
\begin{equation}
f_t(x_1,\ldots,x_{n-1}) :=
\prod_{k=1}^{n-1} \ \ \prod_{\diagsorted_t(k-1) < j \le \diagsorted_t(k)} \frac{1}{x_k + \deg_t(j)} \, .
\label{eq:f_t}
\end{equation}
Recall from Section~\ref{sec:intro} that the vector $\bm{V}_n$ records the times when the corner boxes of the shape $\delta_n$ are added in the randomly growing Young diagram model / random walk on $\young(\staircase_n)$.
The generating factor $f_t(x_1,\dots,x_{n-1})$  is, essentially, the joint Fourier transform of the vector $\bm{V}_n$, conditioned on the random walk path encoded by the tableau $t$; see Subsection~\ref{subsec:equivalenceConjectures}.

\definecolor{colorone}{rgb}{0,0.2,0.8}
\definecolor{colortwo}{rgb}{1,0,0}
\definecolor{colorthree}{rgb}{0,0.4,0}
\definecolor{colorfour}{rgb}{0.5,0,0.5}
\definecolor{colorfive}{rgb}{0,0,0}

\begin{ex}
\label{ex:syt6}
For the tableau $t$ shown in Fig.~\ref{fig:syt-sn6} (left), we have 
\begin{gather*}
\begin{aligned}
\diag_t &= 
({{\color{colorone}10}, {\color{colorthree}13}, {\color{colorfive}15}, {\color{colorfour}14}, {\color{colortwo}11}}),\\
\sigma_t &= 
({{\color{colorone}1}, {\color{colorthree}3}, {\color{colorfive}5}, {\color{colorfour}4}, {\color{colortwo}2}}), \\
\deg_t &=
({{\color{colorone}1}, {\color{colorone}2}, {\color{colorone}2}, {\color{colorone}3}, {\color{colorone}3}, {\color{colorone}3}, {\color{colorone}4}, {\color{colorone}4}, {\color{colorone}4}, {\color{colorone}4}, {\color{colortwo}3}, {\color{colorthree}2}, {\color{colorthree}3}, {\color{colorfour}2}, {\color{colorfive}1}}),
\end{aligned} \\
f_t =
{\color{colorone}\frac{1}{(x_{1}+{1})(x_{1}+{2})^2 (x_{1}+{3})^3 (x_{1}+{4})^4}} \cdot
{\color{colortwo}\frac{1}{x_{2}+{3}}} \cdot
{\color{colorthree}\frac{1}{(x_{3}+{2})(x_{3}+{3})}} \cdot
{\color{colorfour}\frac{1}{x_{4}+{2}}} \cdot
{\color{colorfive}\frac{1}{x_{5}+{1}}} \, .
\end{gather*}
For example, $\deg_t(5)=3$, because $\lambda^{(5)}$, the sixth Young diagram in the growth sequence associated with the tableau $t$, is the partition $(3,1,1)$, which has $3$ external corners lying within $\delta_5$, that is, its out-degree in the graph $\young(\delta_5)$ is $3$.

Here, we have used colors to illustrate how the entries of $\diag_t$ determine a decomposition of $\deg_t$ into blocks, which correspond to different variables $x_k$ in the definition of the generating factor $f_t$.
\end{ex}

\begin{figure}[t!]
\centering
\centering
{\includegraphics[height=30mm]{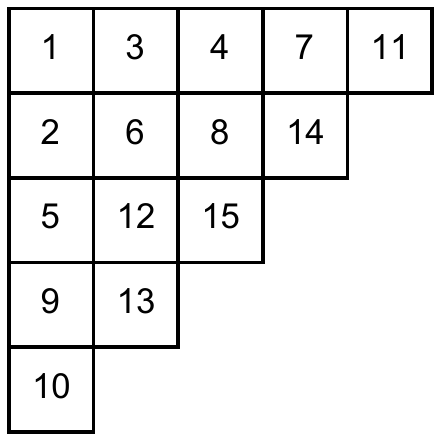}}
\hspace{0.3cm} \raisebox{40pt}{$\xmapsto{\ \eg\ }$} \hspace{0.3cm}
{\includegraphics[height=30mm]{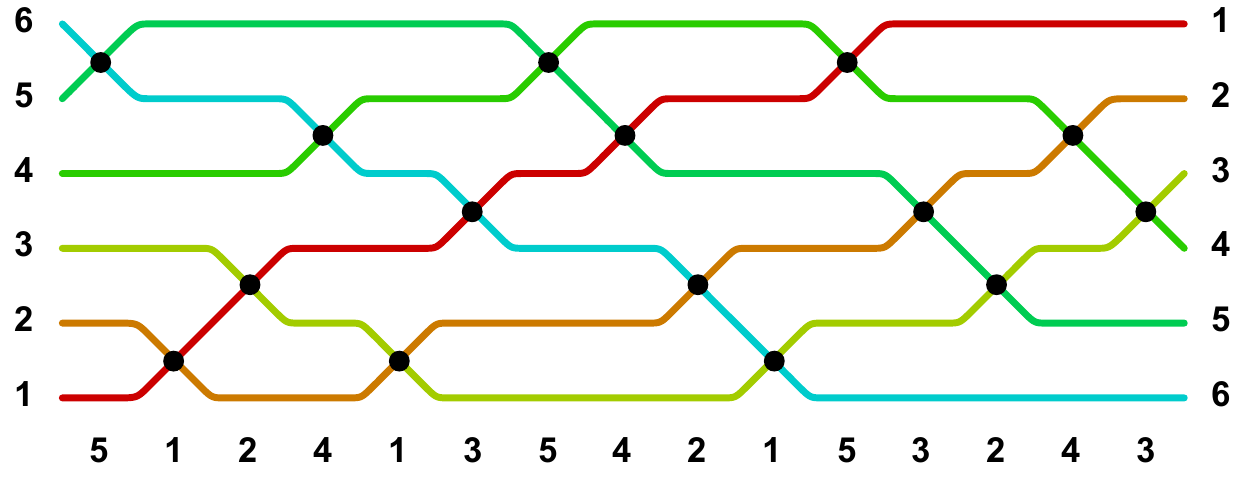}}
\caption{A staircase shape standard Young tableau $t$ of order~6, shown in `English notation', and the associated sorting network $s=\eg(t)$ of order~6 (illustrated graphically as a wiring diagram) with swap sequence $(5,1,2,4,1,3,5,4,2,1,5,3,2,4,3)$.}
\label{fig:syt-sn6}
\end{figure}

\subsection{Sorting networks}
\label{subsec:sortingNetworks}

Recall that a \firstmention{sorting network of order~$n$} is a synonym for a reduced word decomposition of the reverse permutation $\rev_n = (n,n-1,\ldots,1)$ in terms of the Coxeter generators $\tau_j = (j \ \ j+1)$, $1\le j< n $, of the symmetric group $S_n$.
Formally, a sorting network is a sequence of indices $s=(s_1,\ldots,s_{N})$ of length $N=n(n-1)/2$, such that
$ 1\le s_j <n $ for all $j$ and 
$ \rev_n = \tau_{s_{N}} \cdots \tau_{s_2} \tau_{s_1}$.

We denote by $\sort_n$ the set of sorting networks of order~$n$.
The elements of $\sort_n$ can be portrayed graphically using \firstmention{wiring diagrams}, as illustrated in Fig.~\ref{fig:syt-sn6}.
They can also be interpreted as \firstmention{maximal length chains} in the \firstmention{weak  Bruhat order} or, equivalently, shortest paths in the poset lattice (which is the Cayley graph of $\sym_n$ with the adjacent transpositions $\tau_j$ as generators, see Fig.~\ref{subfig:caleyGraph}) connecting the identity permutation $\id_n$ to the permutation $\rev_n$.
We refer to~\cite{bjoernerBrenti05, humphreys90} for details on this terminology.

We associate with a sorting network $s \in \sort_n$ the parameters $\fin_s$, $\pi_s$, $\deg_s$, and $g_s$ that will play a role analogous to the parameters $\diag_t$, $\sigma_t$, $\deg_t$, and $f_t$ for $t\in \syt(\staircase_n)$.

We define the vector $ \fin_s = (\fin_s(1),\fin_s(2),\ldots,\fin_s(n-1))$ by setting
\[
\fin_s(k) := \max\{ 1\le j \le N\colon s_j = k \}
\]
to be the index of the last swap occurring between positions $k$ and $k+1$.
We define $\pi_s \in \sym_{n-1}$ to be the permutation encoding the ordering of the entries of $\fin_s$, so that $\fin_s(j) < \fin_s(k)$ if and only if $\pi_s(j) < \pi_s(k)$. 
We denote by $\finsorted_s$ the increasing rearrangement of $\fin_s$, and use the notational convention $\finsorted_s(0)=0$.

We next define $\deg_s = (\deg_s(0),\ldots,\deg_s(N-1))$ to be the vector with coordinates
$\deg_s(k) := |\{ 1\le j\le n-1\colon \nu^{(k)}(j)<\nu^{(k)}(j+1)\}|$,
where $\nu^{(k)} := \tau_{s_k}\cdots \tau_{s_2}\tau_{s_1}$ is the $k$-th permutation in the path encoded by $s$.
In  words, $\deg_s(k)$ is the out-degree of $\nu^{(k)}$ in the Cayley graph of $S_n$ (with the adjacent transpositions as generators); see Fig.~\ref{subfig:caleyGraph}.

Notice that the oriented swap process on $n$ particles introduced in Subsection~\ref{subsec:models} is a continuous-time simple random walk on this graph that starts from $\id_n$ (and necessarily ends at $\rev_n$).
The (random) sorting network $S$ that encodes the path of the OSP is then distributed as follows:
\begin{equation}
\label{eq:deg_s}
\P(\nobreak{S=s}) = \prod_{j=0}^{N-1} \frac{1}{\deg_s(j)}
\qquad
\text{for all } s\in\sort_n \, .
\end{equation}

Finally, the \firstmention{generating factor} $g_s$ of $s$ is defined, analogously to~\eqref{eq:f_t}, as the rational function
\begin{equation}
g_s(x_1,\ldots,x_{n-1}) = \prod_{k=1}^{n-1} \ \ \prod_{\finsorted_s(k-1) < j \le \finsorted_s(k)} \frac{1}{x_k + \deg_s(j)} \, .
\label{eq:g_s}
\end{equation}
Recall from Section~\ref{sec:intro} that the vector $\bm{U}_n$ records the times when the last swap between particles in any two neighboring positions occurs in the oriented swap process / random walk on the graph defined above.
The generating factor $g_s(x_1,\dots,x_{n-1})$ is, essentially, the joint Fourier transform of the vector $\bm{U}_n$, conditioned on the random walk path encoded by the sorting network $s$; see Subsection~\ref{subsec:equivalenceConjectures}.

\begin{ex} 
\label{ex:sortingnet6}
For the sorting network $s = (5, 1, 2, 4, 1, 3, 5, 4, 2, 1, 5, 3, 2, 4, 3) \in \sort_6$ shown in Fig.~\ref{fig:syt-sn6} (right), we have that
\begin{gather*}
\begin{aligned}
\fin_s &
= ({{\color{colorone}10}, {\color{colorthree}13}, {\color{colorfive}15}, {\color{colorfour}14}, {\color{colortwo}11}}),\\
\pi_s &= ({{\color{colorone}1}, {\color{colorthree}3}, {\color{colorfive}5}, {\color{colorfour}4}, {\color{colortwo}2}}),\\
\deg_s
 &= ({{\color{colorone}5}, {\color{colorone}4}, {\color{colorone}3}, {\color{colorone}3}, {\color{colorone}3}, {\color{colorone}2}, {\color{colorone}3}, {\color{colorone}2}, {\color{colorone}2}, {\color{colorone}3}, {\color{colortwo}2}, {\color{colorthree}1}, {\color{colorthree}2}, {\color{colorfour}1}, {\color{colorfive}1}}),
 \end{aligned} \\
g_s = 
{\color{colorone}\frac{1}{(x_{1}+{5})(x_{1}+{4})(x_{1}+{3})^5(x_{1}+{2})^3}} \cdot
{\color{colortwo}\frac{1}{x_{2}+{2}}} \cdot
{\color{colorthree}\frac{1}{(x_{3}+{1})(x_{3}+{2})}} \cdot
{\color{colorfour}\frac{1}{x_{4}+{1}}} \cdot
{\color{colorfive}\frac{1}{x_{5}+{1}}} \, .
 \end{gather*}
The above parameters are shown using color coding as in Example~\ref{ex:syt6}.
\end{ex}

\subsection{The Edelman-Greene correspondence}
\label{subsec:EG}

Stanley conjectured and then proved~\cite{stanley84} that sorting networks are equinumerous with staircase shape Young tableaux of the same order, i.e.\ $|\sort_n|=|\syt(\staircase_n)|$.
Edelman and Greene~\cite{edelmanGreene87} found an explicit combinatorial bijection $\eg:\syt(\staircase_n)\to\sort_n$, which is now known as the Edelman--Greene correspondence (see also~\cite{hamakerYoung14, lascouxSchutzenberger82, little03}).
The standard tableau and the sorting network of Examples~\ref{ex:syt6} and~\ref{ex:sortingnet6} (see also Fig.~\eqref{fig:syt-sn6}) are associated to each other via $\eg$.

The map $\eg$ can be conveniently described in terms of the Sch\"utzenberger operator iterated $N$ times until all the original labels of a tableau $t\in \syt(\staircase_n)$ are `evacuated' (recall that $N=n(n-1)/2$ is the number of boxes of the Young diagram $\delta_n$).

The Sch\"utzenberger operator $\Phi \colon \syt(\staircase_n)\to \syt(\staircase_n)$ acts as follows.
For a tableau $t = (t_{i,j})\in \syt(\staircase_n)$, define the \emph{evacuation path} to be the sequence $c=(c_1,c_2,\ldots, c_{n-1})$ of boxes $c_m=(i_m,j_m)\in\delta_n$ such that:
\begin{enumerate}[label=(\roman*)]
\item $c_1=(i_1,j_1)$ where  $t_{i_1,j_1}=N$;
\item if $c_{m-1}=(a,b)$, then $c_m$ is the box $(a-1,b)$ if $t_{a-1,b}>t_{a,b-1}$ and the box $(a,b-1)$ otherwise, for all $2\leq m \leq n-1$.
\end{enumerate}
In this definition the convention is that $t_{i,0}=t_{j,0}=0$ for all $i$ and $j$.
Note that $c_{n-1}=(1,1)$.
Define $t' = (t'_{i,j})_{(i,j)\in \delta_n}$ by letting $t'_{c_m}:=t_{c_{m+1}}$ for $m=1,\dots n-2$, $t'_{c_{n-1}}:=0$, and $t'_{i,j}:=t_{i,j}$ whenever $(i,j)\notin c$ (\emph{sliding} along the evacuation path).
Then, the tableau $\Phi(t)=(\hat{t}_{i,j})_{(i,j)\in\delta_n}$ is constructed by setting $\hat{t}_{i,j}=t'_{i,j}+1$ for all $(i,j)\in \delta_n$ (\emph{increment}).

\begin{figure}[t!]
\centering
{\includegraphics[width=.85\columnwidth]{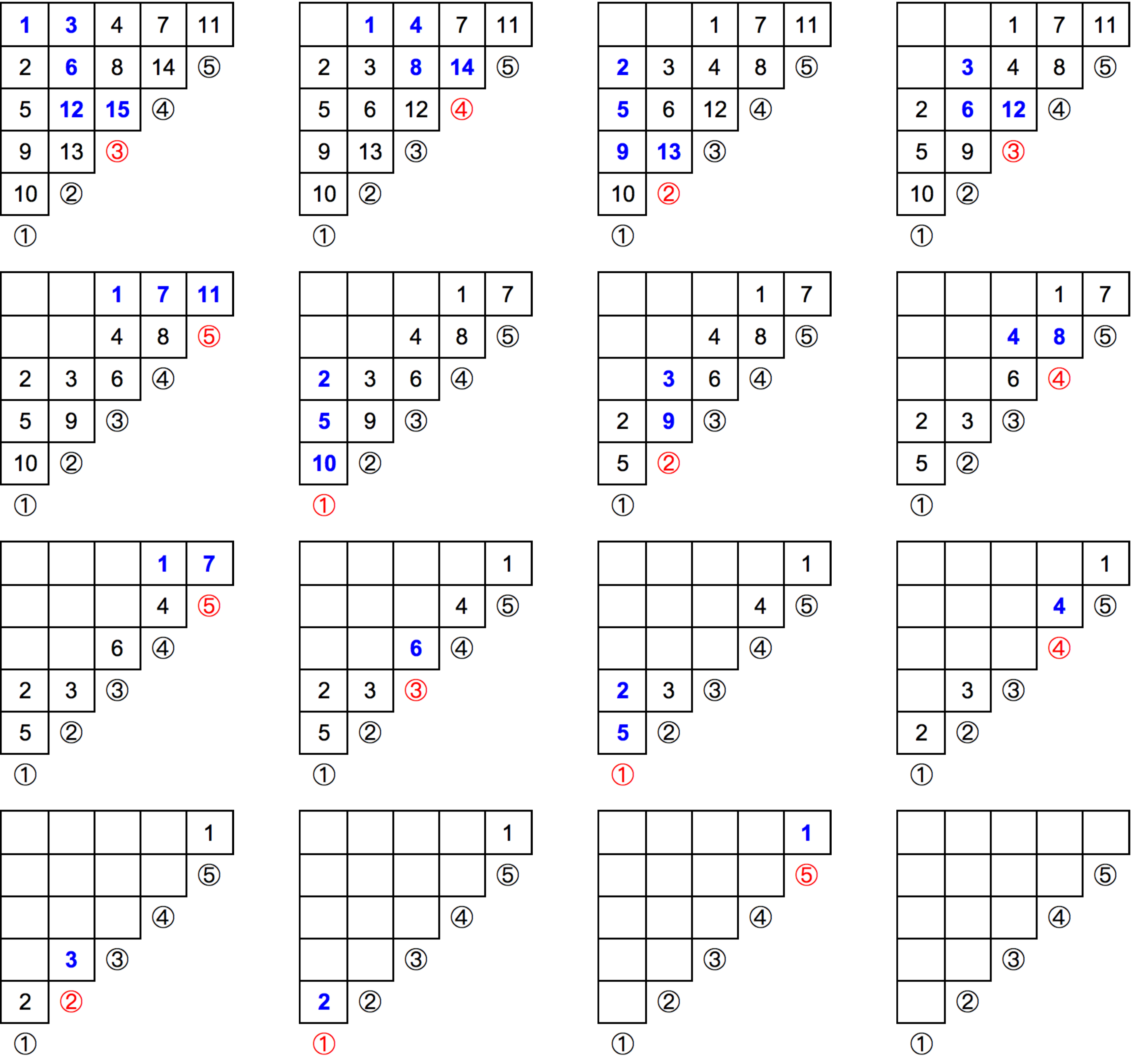}}
\caption{The map $\eg:\syt(\staircase_n)\to\sort_n$ can be visualized as `emptying' the tableau $t$.
Here the tableau is the same as in Fig.~\ref{fig:syt-sn6}.
We highlight in blue the evacuation paths (obtained by starting from the maximum entry and repeatedly moving to the box above or to the left that contains the largest entry).
At each step we perform an outward sliding along the evacuation path, keeping track of $j_{\max}$ (in red).
To keep the picture as intuitive as possible, we do not perform the increment $+1$ (the omission of this step does not change the sequence of $j_{\max}$'s) and we only indicate the original labels of the tableau $t$.
The associated sorting network is the sequence of indices $j_{\max}$'s read in reverse order: $(5,1,2,4,1,3,5,4,2,1,5,3,2,4,3)$.}
\label{fig:syt-sn6_emptying}
\end{figure}

In the notation of~\cite[\S~4]{angelHolroydRomikVirag07}, for a tableau $t\in\syt(\staircase_n)$, set $j_{\max}(t) := j_1$.
Then, the Edelman--Green map takes the tableau $t$ as an input and returns the sorting network
\[
\eg(t) := \left(j_{\max} \left(\Phi^{N-m}(t)\right)\right)_{1\leq m\leq N} \, ,
\]
where $\Phi^{m}$ denotes the $m$-th iterate of $\Phi$.
See Fig.~\ref{fig:syt-sn6_emptying}.

The following result is easy to guess from Examples~\ref{ex:syt6} and~\ref{ex:sortingnet6}.

\begin{prop}
\label{prop:edelman-greene-params}
If $t\in \syt_n$ and $s=\eg(t) \in \sort_n$, then 
\begin{equation}
\label{eq:edelman-greene-params}
\fin_s = \diag_t \qquad\text{and}\qquad
\pi_s = \sigma_t \, .
\end{equation}
\end{prop}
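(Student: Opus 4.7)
The plan is to first observe that the identity $\pi_s = \sigma_t$ is an immediate consequence of $\fin_s = \diag_t$: both permutations are by definition the ranking permutations of the respective vectors, so equal vectors yield equal ranking permutations. The whole statement thus reduces to proving $\fin_s(k) = t_{n-k,k}$ for all $k = 1, \dots, n-1$. Writing $j_I := j_{\max}(\Phi^I(t))$ for the column of the maximum entry of the $I$-th Sch\"utzenberger iterate, and using the defining formula $s_m = j_{\max}(\Phi^{N-m}(t))$, a change of index transforms the identity $\fin_s(k) = t_{n-k,k}$ into
\[
\min\{I \geq 0 : j_I = k\} = N - t_{n-k,k}.
\]
So the task becomes to show that the first iteration $I$ at which $\Phi^I(t)$ has its maximum entry sitting at the corner $(n-k,k)$ of $\staircase_n$ is precisely $I = N - t_{n-k,k}$.

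The main ingredient will be a simple geometric lemma: the evacuation path of any $u \in \syt(\staircase_n)$ meets no corner of $\staircase_n$ apart from its starting box. Indeed, if the maximum of $u$ is located at the corner $(a,b) = (n-k',k')$, the path proceeds from $(a,b)$ to $(1,1)$ by up- and left-steps and therefore lies entirely inside the rectangle $[1,a]\times[1,b]$. Any other corner $(n-k,k)$ of $\staircase_n$ contained in this rectangle would satisfy both $n-k \leq a = n-b$ (giving $k \geq b$) and $k \leq b$, forcing $k=b$ and hence $(n-k,k)=(a,b)$. So $(a,b)$ is the only corner of $\staircase_n$ in the rectangle, and only the starting box of the path is a corner of $\staircase_n$.

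With this in hand, I would fix $k$ and track the entry at the box $(n-k,k)$ across the iterates $\Phi^I(t)$. Whenever the maximum of $\Phi^I(t)$ is not located at $(n-k,k)$, the geometric lemma guarantees that the evacuation path of $\Phi^I(t)$ avoids $(n-k,k)$, so the sliding step preserves the entry there and the subsequent increment adds exactly $1$. A straightforward induction on $I$ then yields
\[
\Phi^I(t)_{n-k,k} = t_{n-k,k} + I
\]
for every $I$ up to and including the first iteration $I^*$ at which the maximum reaches $(n-k,k)$. The corner value thus first attains $N$ precisely at $I = N - t_{n-k,k}$, and since the maximum cannot sit at $(n-k,k)$ while the entry there is strictly less than $N$, this $I$ is also the first iteration at which the maximum is located at $(n-k,k)$. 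Hence $I^* = N - t_{n-k,k}$, and substituting into the displayed formula yields $\fin_s(k) = t_{n-k,k} = \diag_t(k)$, completing the argument. The only delicate point is the induction step, where one must rule out the possibility of some earlier slide perturbing the corner $(n-k,k)$---this is exactly what the geometric lemma prevents.
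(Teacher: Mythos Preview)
Your proof is correct and follows essentially the same route as the paper's: both reduce the claim to showing $\min\{I \ge 0 : j_{\max}(\Phi^I(t)) = k\} = N - t_{n-k,k}$, and then conclude. The paper dispatches this last equality in one line by appealing to the ``emptying'' visualization of $\eg$, whereas you supply the details rigorously via your geometric lemma (evacuation paths touch no corner of $\staircase_n$ except their starting box) and the resulting increment-tracking argument---a welcome elaboration of what the paper leaves implicit.
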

\begin{proof}
The second relation follows trivially from the first.
This first identity is an easy consequence of the definition of the Edelman--Greene correspondence, and specifically of the way the map $\eg:\syt(\staircase_n)\to\sort_n$ can be visualized as `emptying' the tableau $t$ (see the discussion above and Fig.~\ref{fig:syt-sn6_emptying}) by repeatedly applying the Sch\"utzenberger operator:
\begin{align*}
\fin_s(k) 
&= \max \{ 1\le m\le N \,:\, j_\textrm{max}(\Phi^{N-m}(t)) = k \} \\
&= N - \min \{ 0\le r\le N-1\,:\, j_\textrm{max}(\Phi^r(t)) = k \} \\
&= N- (N- t_{n-k,k}) = t_{n-k,k} = \diag_t(k) \, .
\qedhere
\end{align*}
\end{proof}

\subsection{The combinatorial identity}
\label{subsec:combinIdentity}

Let $\freevecspace_{n-1}$ denote the free vector space generated by the elements of $\sym_{n-1}$ over the field of rational functions $\C_\mathbf{x}^{n-1}:=\C(x_1,\ldots,x_{n-1})$.
Define the following generating functions as elements of $\freevecspace_{n-1}$:
\begin{align}
F_n(x_1,\ldots,x_{n-1}) &:=\!\!\! \sum_{t\in \syt(\staircase_n)} f_t(x_1,\ldots,x_{n-1}) \sigma_t \, ,
\label{eq:genfun-F}
\\
G_n(x_1,\ldots,x_{n-1}) &:= \sum_{s\in \sort_n} 
g_s(x_1,\ldots,x_{n-1}) \pi_s \, .
\label{eq:genfun-G}
\end{align}
Conjecture~\ref{main-conj-reformulated} is the identity $F_n(x_1,\ldots,x_{n-1}) = G_n(x_1,\ldots,x_{n-1})$ (an equality of vectors with $(n-1)!$ components).
\begin{rem}
Note that in general it is \emph{not} true that $f_t = g_s$ if $s=\eg(t)$, as Examples~\ref{ex:syt6} and~\ref{ex:sortingnet6} clearly show. Thus, the Edelman--Greene correspondence does not seem to imply the conjecture in an obvious way. However, using~\eqref{eq:edelman-greene-params} we see that the  correspondence does imply the limiting case 
\begin{equation}
\lim\limits_{x \to\infty} x^N (F_n(x,\ldots,x)-G_n(x,\dots,x))=0 \, .
\label{eq:genfun-G_limiting}
\end{equation}
The above limit is equivalent to the statement
\[
\left|\{t\in\syt(\staircase_n)\,\colon\, \sigma_t=\gamma\}\right|=\left|\{s\in\sort_n\,\colon\, \pi_s=\gamma\}\right| \qquad \text{for all $\gamma\in S_{n-1}$} \, ,
\]
which is true by Proposition~\ref{prop:edelman-greene-params}.
\end{rem}
\begin{rem}
It is natural to wonder if there exists a bijection $\phi\colon \syt(\staircase_n) \to \sort_n$ (necessarily different from $\eg$), such that $f_t = g_{\phi(t)}$ for all $t\in \syt(\staircase_n)$, thus leading to a proof of Conjecture~\ref{main-conj}.
However, already for $n=4$, one can verify using Fig.~\ref{fig:syt-sn4} that the two sets of generating factors $\{f_t\}_{t\in \syt(\staircase_n)}$ and $\{g_s\}_{s\in \sort_n}$ are different.
Therefore, no bijection between $\syt(\staircase_n)$ and $\sort_n$ has the desired property.
\end{rem}
The calculation of $F_n(x_1,\ldots,x_{n-1})$ and $G_n(x_1,\ldots,x_{n-1})$ involves a summation over $|\syt(\staircase_n)|=|\sort_n|=N!/(1^{n-1}\cdot3^{n-2}\cdots(2n-3)^1)$ elements (e.g, $768$ elements for $n=5$ and $292864$ elements for $n=6$). For $n\leq 6$ this calculation is feasible by using symbolic algebra software.
We wrote code in Mathematica --- downloadable as a companion package \cite{orientedswaps-mathematica} to this paper --- to perform this calculation and check that the two functions are equal, thus proving Theorem~\ref{thm:main-thm2}.

\begin{figure}[t!]
\centering
{\includegraphics[width=1\columnwidth]{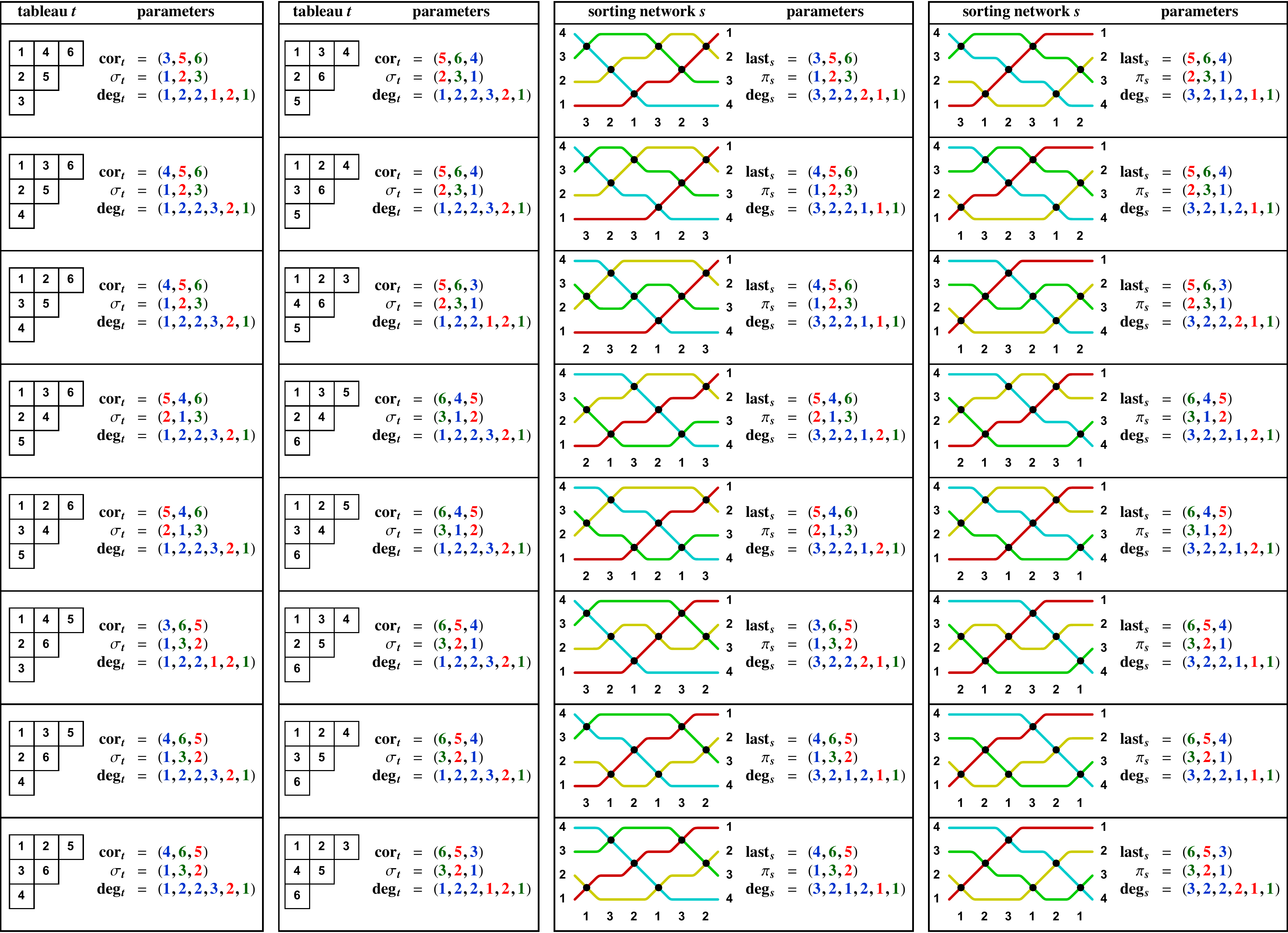}}
\caption{The $16$ staircase shape standard Young tableaux and sorting networks of order $4$ (ordered so that entries in the same relative positions in the two tables correspond to each others via the Edelman--Greene correspondence). As in Examples~\ref{ex:syt6}-\ref{ex:sortingnet6}, the coloring of the parameter entries emphasizes how different entries of $\deg_t$ and $\deg_s$ correspond to different factors in the definition of the generating factors $f_t$ and $g_s$.}
\label{fig:syt-sn4}
\end{figure}

\begin{ex}
For $n=4$, the generating functions can be computed by hand using the tables shown in Fig.~\ref{fig:syt-sn4} above.
For example, 
the component of the two generating functions associated with the identity permutation $\operatorname{id}=(1,2,3)$ is
\[
\begin{gathered}
\big(F_4(x_1,x_2,x_3)\big)_{\id}
= \big(G_4(x_1,x_2,x_3)\big)_{\id} \\
= 
\frac{x_1 + 2x_2 + 5}{(x_1+1) (x_1+2)^2 (x_1+3) (x_2+1) (x_2+2) (x_3+1)} \, .
\end{gathered}
\]
\end{ex}

\subsection{Equivalence of combinatorial and probabilistic conjectures}
\label{subsec:equivalenceConjectures}

We now prove the equivalence between Conjectures~\ref{main-conj} and~\ref{main-conj-reformulated}.
Conjecture~\ref{main-conj} can be viewed as claiming the equality $p_{\bm{U}_n} = p_{\bm{V}_n}$ of the joint density functions of $\bm{U}_n$ and $\bm{V}_n$.
We thus aim to derive explicit formulas for $p_{\bm{U}_n}$ and $p_{\bm{V}_n}$.

\subsubsection*{Decomposition of the densities}

As discussed in Subsections~\ref{subsec:staircaseTableaux} and~\ref{subsec:sortingNetworks}, both the randomly growing Young diagram model and the oriented swap process can be interpreted as continuous-time random walks.
The idea is then to write the density function of the last swap times $\bm{U}_n$ (resp.\ $\bm{V}_n$) as a weighted average of the conditional densities conditioned on the path that the process takes to get from the initial state $\id_n$  (resp.\   $\emptyset$) to the final state $\rev_n$ (resp.\  $\staircase_n$):
\begin{equation}
\label{eq:decomposition}
\begin{split}
 p_{\bm{U}_n}(u_1,\ldots,u_{n-1}) &= \sum_{s \in \sort_n} \P(S=s)  \, p_{\bm{U}_n|S=s}(u_1,\ldots,u_{n-1}) \, ,
 \\
p_{\bm{V}_n}(v_1,\ldots,v_{n-1})&= \sum_{t \in \syt(\staircase_n)} \P(T=t) \, p_{\bm{V}_n|T=t}(v_1,\ldots,v_{n-1}) \, .
\end{split}
\end{equation}
Here, $s$ (resp.\ $t$) can be viewed as a realization of a simple random walk $S$ (resp.\  $T$) on  the Cayley graph of $S_n$ (resp.\ on the directed graph $\young(\staircase_n)$).
The probabilities $\P(S=s)$ and $\P(T=t)$ are simply given by~\eqref{eq:deg_t} and~\eqref{eq:deg_s}.
We will now deal with the conditional densities.

\subsubsection*{Conditional densities}
We will now show that the conditional densities $p_{\bm{U}_n|S=s}(u_1,\ldots,u_{n-1})$ and $p_{\bm{V}_n|T=t}(v_1,\ldots,v_{n-1})$ are completely determined by the vectors $\finsorted_s$ and $\diagsorted_t$ and their corresponding orderings $\sigma_t$ and $\pi_s$ in the simple random walks, and the sequences of out-degrees $\deg_t$ and $\deg_s$ along the paths (which correspond to the exponential clock rates to leave each vertex in the graph where the random walk is taking place).

In the case of the OSP conditioned on the path $S=s$, take a sequence of independent random variables $\xi_1,\dots,\xi_N$, where $\xi_j$ has exponential distribution with rate $\deg_s(j)$.
Once the OSP has reached the state 
$ \tau_{s_k}\cdots \tau_{s_2}\tau_{s_1}$,
there are $\deg_s(j)$ Poisson clocks running in parallel, so, by standard properties of Poisson clocks (see~\cite[Ex.~4.1, p.~264]{romik15}) the time until a swap occurs is distributed as $\xi_j$ and is independent of the choice of the swap actually occurring.
Let then $\eta _t$ be defined~as
\[
\eta_t :=
\begin{cases}
\id_n
&\text{if $0\leq t< \xi_1$,}\\
\tau_{s_1}
&\text{if $\xi_1\leq t< \xi_1+\xi_2$,}\\
\tau_{s_2} \tau_{s_1}
&\text{if $\xi_1+\xi_2\leq t< \xi_1+\xi_2+\xi_3$,}\\
\vdots&\vdots\\
\tau_{s_{N-1}} \cdots \tau_{s_2} \tau_{s_1}
&\text{if $\xi_1+\xi_2+\cdots+\xi_{N-1}\leq t< \xi_1+\xi_2+\cdots+\xi_N$,}\\
\tau_{s_N}\tau_{s_{N-1}} \cdots \tau_{s_2} \tau_{s_1}
&\text{if $\xi_1+\xi_2+\cdots+\xi_{N}\leq t$.}
\end{cases}
\]
Thanks to the remarks above, this construction gives the correct distribution for the process $\left(\eta _t\right)_{t\geq0}$ as an oriented swap process on $n$ particles.

Next, observe that the conditional density $p_{\bm{U}_n|S=s}(u_1,\ldots,u_{n-1})$ is nonzero on one and only one of the $(n-1)!$ Weyl chambers
\begin{equation}
\label{eq:WeylChamber}
W_{\gamma} := \left\{\bm{u} = (u_1,\dots, u_{n-1}) \in\mathbb{R}^{n-1}_{\geq 0} \colon u_{\gamma^{-1}(1)}\leq u_{\gamma^{-1}(2)}\leq \cdots \leq u_{\gamma^{-1}(n-1)}\right\}
\end{equation}
associated to each of the different possible orderings $\gamma\in S_{n-1}$ of the variables $u_1,\ldots,u_{n-1}$.
For a path $s\in \sort_n$, the permutation $\pi_s\in S_{n-1}$ encodes the information about the relative order of the variables $U_n(1), U_n(2),\ldots, U_n(n-1)$, hence the conditional density will be nonzero precisely on the chamber $W_{\pi_s}$.

The last piece of information needed to compute the conditional density is the vector of integers $\fin_s$ that encodes, for each $k$, the point along the path wherein the last swap between positions $k$ and $k+1$ occurred.
Denote by $\bmUsorted_n$ the increasing rearrangement of  $\bm{U}_n$, so that $\Usorted_n(1) \leq  \Usorted_n(2) \leq \ldots  \leq \Usorted_n(n-1)$ are the order statistics of $\bm{U}_n$.
Conditioned on $S=s$, we have that $\Usorted_n(k) = U_n(\pi_s^{-1}(k))$ and
\begin{align*}
\Usorted_n(1)&=\xi_1+\cdots+\xi_{\finsorted_s(1)} \, ,\\
\Usorted_n(2)-\Usorted_n(1)&=\xi_{\finsorted_s(1)+1}+\cdots+\xi_{\finsorted_s(2)} \, ,\\
&\,\,\,\vdots\\
\Usorted_n(k)-\Usorted_n(k-1)&=\xi_{\finsorted_s(k-1)+1}+\cdots+\xi_{\finsorted_s(k)} \, ,\\
&\,\,\,\vdots\\
\Usorted_n(n-1)-\Usorted_n(n-2)&=\xi_{\finsorted_s(n-2)+1}+\cdots+\xi_{\finsorted_s(n-1)} \, .
\end{align*}
In particular, conditioned on the event $S=s$, the variables $\Usorted_n(k)-\Usorted_n(k-1)$, $k=1,\dots, n-1$, are independent and have density
\newcommand{\convolutionop}{\mathop{\mathlarger{\mathlarger{\mathlarger{*}}}}}
\[
p_{\Usorted_n(k)-\Usorted_n(k-1)|S=s} (x)=\left(\convolutionop_{j=\finsorted_s(k-1)+1}^{\finsorted_s(k)} 
E_{\deg_s(j)}\right) \left(x\right), 
\]
where the notation $\convolutionop\limits_{j=1}^m f_j$ is a shorthand for the convolution $f_1 * \ldots * f_m$ of one-dimensional densities and $E_\rho(x)= \rho e^{-\rho x} \1_{[0,\infty)}(x)$ is the exponential density with parameter $\rho>0$. 
We conclude that the density of $\bm{U}_n$ conditioned on $S=s$ is
\begin{equation}
\label{eq:joint-density-un_conditioned}
p_{\bm{U}_n|S=s}(\bm{u})
=
\1_{W_{\pi_s}}(\bm{u})
\prod_{k=1}^{n-1}  \left(\convolutionop_{j=\finsorted_s(k-1)+1}^{\finsorted_s(k)} 
E_{\deg_s(j)}\right) \left(u_{\pi_s^{-1}(k)}-u_{\pi_s^{-1}(k-1)}\right) ,
\end{equation}
with the convention that $u_0:=0$ and, for any $\gamma\in S_{n-1}$, $\gamma(0):=0$.

An analogous construction holds for the continuous-time random walk on $\young(\staircase_n)$.
\emph{Mutatis mutandis}, we thus obtain that
\begin{equation}
\label{eq:joint-density-vn_conditioned}
p_{\bm{V}_n|T=t}(\bm{v})
=
\1_{W_{\sigma_t}}(\bm{v})
\prod_{k=1}^{n-1}\left(\convolutionop_{j=\diagsorted_t(k-1)+1}^{\diagsorted_t(k)} 
E_{\deg_t(j)} \right) \left(v_{\sigma_t^{-1}(k)}-v_{\sigma_t^{-1}(k-1)}\right) ,
\end{equation}
with the convention that $v_0:=0$.

\subsubsection*{Probability densities of $\bm{U}_n$ and $\bm{V}_n$}

Putting together~\eqref{eq:deg_s} with~\eqref{eq:joint-density-un_conditioned} and~\eqref{eq:deg_t} with~\eqref{eq:joint-density-vn_conditioned}, the formulas for  the density functions of $\bm{U}_n$ and of $\bm{V}_n$  take the form
\begin{align*}
p_{\bm{U}_n}(\bm{u})
&= 
\sum_{s \in \sort_n} \frac{\1_{W_{\pi_s}}(\bm{u})}{\displaystyle\prod_{j=0}^{N-1} \deg_s(j)}
\prod_{k=1}^{n-1}  \left(\convolutionop_{j=\finsorted_s(k-1)+1}^{\finsorted_s(k)} 
E_{\deg_s(j)}\right) \left(u_{\pi_s^{-1}(k)}-u_{\pi_s^{-1}(k-1)}\right) ,\nonumber
\\
p_{\bm{V}_n}(\bm{v})
&= 
\!\!\!\sum_{t \in \syt(\staircase_n)} 
\frac{\1_{W_{\sigma_t}}(\bm{v})}{\displaystyle\prod_{j=0}^{N-1} \deg_t(j) }\displaystyle
\prod_{k=1}^{n-1}\left(\convolutionop_{j=\diagsorted_t(k-1)+1}^{\diagsorted_t(k)} 
E_{\deg_t(j)} \right) \left(v_{\sigma_t^{-1}(k)}-v_{\sigma_t^{-1}(k-1)}\right) .
\end{align*}
Notice that the indicator functions of the Weyl chambers may be dropped, due to the support $[0,\infty)$ of the exponential densities; however, we keep them in the formulas for later convenience.

\begin{ex}
For $n=4$, using the parameters $\fin_s$, $\pi_s$ and $\deg_s$ from Fig.~\ref{fig:syt-sn4}, we can deduce explicit formulas for $p_{\bm{U}_4}(u_1,u_2,u_3)$ in every Weyl chamber.
Using the same colors as in Fig.~\ref{fig:syt-sn4}, we have, e.g., that
\begin{align*}
\begin{split}
p_{\bm{U}_4}({\color{colorone}u_1},{\color{colortwo}u_2},{\color{colorthree}u_3})
&=\left[E_{\color{colorone}3}*E_{\color{colorone}2}*E_{\color{colorone}2}({\color{colorone}u_1})\right]\left[E_{\color{colortwo}2}*E_{\color{colortwo}1}({\color{colortwo}u_2}-{\color{colorone}u_1})\right]\left[E_{\color{colorthree}1}({\color{colorthree}u_3}-{\color{colortwo}u_2})\right]\\
&\quad +2\left[E_{\color{colorone}3}*E_{\color{colorone}2}*E_{\color{colorone}2}*E_{\color{colorone}1}({\color{colorone}u_1})\right]\left[E_{\color{colortwo}1}({\color{colortwo}u_2}-{\color{colorone}u_1})\right]\left[E_{\color{colorthree}1}({\color{colorthree}u_3}-{\color{colortwo}u_2})\right]
\end{split}
\intertext{if ${\color{colorone}u_1}\leq {\color{colortwo}u_2}\leq {\color{colorthree}u_3}$, whereas}
p_{\bm{U}_4}({\color{colorone}u_1},{\color{colortwo}u_2},{\color{colorthree}u_3})
&=2\left[E_{\color{colorone}3}*E_{\color{colorone}2}*E_{\color{colorone}2}*E_{\color{colorone}1}({\color{colortwo}u_2})\right]\left[E_{\color{colortwo}2}({\color{colorone}u_1}-{\color{colortwo}u_2})\right]\left[E_{\color{colorthree}1}({\color{colorthree}u_3}-{\color{colorone}u_1})\right]
\end{align*}
if ${\color{colortwo}u_2}\leq {\color{colorone}u_1}\leq {\color{colorthree}u_3}$.
Considering all these $3!$ expressions, and evaluating the convolutions of exponential densities, one obtains that
\[
\begin{split}
p_{\bm{U}_4} & (u_1,u_2,u_3) \\
&=\begin{cases}
\e^{-\left(u_1+u_2+u_3\right)}\left[\e^{u_1+u_2}-(u_1-1)\e^{u_1}-(u_1+1)\e^{u_2}-1\right]&\text{if $u_1\leq u_2\leq u_3$,}\\
\e^{-\left(u_1+u_2+u_3\right)}\left[\e^{u_2}-2u_2\e^{u_2}-1\right]&\text{if $u_2\leq u_1\leq u_3$,}\\
\e^{-\left(u_1+u_2+u_3\right)}\left[\e^{u_1+u_3}-(u_1-1)\e^{u_1}-(u_1+1)\e^{u_3}-1\right]&\text{if $u_1\leq u_3\leq u_2$,}\\
\e^{-\left(u_1+u_2+u_3\right)}\left[\e^{u_2}-2u_2\e^{u_2}-1\right]&\text{if $u_2\leq u_3\leq u_1$,}\\
\e^{-\left(u_1+u_2+u_3\right)}\left[\e^{u_1+u_3}-(u_3-1)\e^{u_3}-(u_3+1)\e^{u_1}-1\right]&\text{if $u_3\leq u_1\leq u_2$,}\\
\e^{-\left(u_1+u_2+u_3\right)}\left[\e^{u_2+u_3}-(u_3-1)\e^{u_3}-(u_3+1)\e^{u_2}-1\right]&\text{if $u_3\leq u_2\leq u_1$.}
\end{cases}
\end{split}
\]
Similarly, one can compute $p_{\bm{V}_4}$, using the data $\diag_t$, $\sigma_t$ and $\deg_t$ (or, alternatively, using the recursion~\eqref{eq:jointDensityRecursive}) and check that $p_{\bm{U}_4}=p_{\bm{V}_4}$.
\end{ex}

\subsubsection*{Fourier transforms and Weyl chambers}

The conjectural equality $p_{\bm{U}_n} = p_{\bm{V}_n}$ of the joint density functions of $\bm{U}_n$ and $\bm{V}_n$ is equivalent to the equality $\hat{p_{\bm{U}_n}} = \hat{p_{\bm{V}_n}}$ of their corresponding Fourier transforms.
In turn, the latter can be manipulated and recast as the combinatorial identity~\eqref{eq:comb-iden} of Conjecture~\ref{main-conj-reformulated}.
We now outline the calculations.

Recalling the notation $W_{\gamma}$ for the Weyl chamber associated to a permutation $\gamma\in S_{n-1}$, as in~\eqref{eq:WeylChamber}, we observe that the identity $p_{\bm{U}_n} = p_{\bm{V}_n}$ is equivalent to the $(n-1)!$ equalities
\begin{equation}
\label{eq:p_q}
p_{\bm{U}_n}(\bm{z}) \1_{W_{\gamma}}(\bm{z})
=p_{\bm{V}_n}(\bm{z})\1_{W_{\gamma}}(\bm{z}) \, , \qquad\quad \gamma\in S_{n-1} \, .
\end{equation}
Introduce the change of variables
\begin{equation}
\label{eq:changeVar}
\Gamma_{\gamma}\colon \R^{n-1}_{\geq 0}\to W_{\gamma}\, ,\qquad  
\bm{z}
\mapsto
\bm{\zeta}=\Gamma_{\gamma}(\bm{z})
\end{equation}
defined by  setting 
\[
\zeta_k = z_1 + \dots + z_{\gamma(k)}\qquad \text{for } 1\leq k\leq n-1 \, .
\]
Notice that for all permutations $\gamma\in S_n$, $\Gamma_{\gamma}$ is a bijection with inverse 
\begin{equation}
\label{eq:changeVar2}
\Gamma_{\gamma}^{-1}\colon W_{\gamma}\to  \R^{n-1}_{\geq 0}\, ,\qquad  
\bm{\zeta}\mapsto\bm{z}=\Gamma_{\gamma}^{-1}(\bm{\zeta})
\end{equation}
 given by
\[
z_1= \zeta_{\gamma^{-1}(1)}
\qquad\text{and}\qquad
z_k = \zeta_{\gamma^{-1}(k)} - \zeta_{\gamma^{-1}(k-1)} \qquad \text{for } 2\leq k\leq n-1 \, .
\]
Therefore, \eqref{eq:p_q} are equivalent to the $(n-1)!$ equalities
\begin{equation}
\label{eq:p_q_gamma}
q_{\bm{U}_n}^{\gamma}(\bm{z})
=q_{\bm{V}_n}^{\gamma}(\bm{z}) \, , \qquad\quad \gamma\in S_{n-1} \, ,
\end{equation}
where 
\begin{align*}
q_{\bm{U}_n}^{\gamma}(\bm{z})
&:= p_{\bm{U}_n}(\Gamma_{\gamma}(\bm{z}))\1_{\R^{n-1}_{\geq 0}}(\bm{z}) \, , \\
q_{\bm{V}_n}^{\gamma}(\bm{z})
&:= p_{\bm{V}_n}(\Gamma_{\gamma}(\bm{z}))\1_{\R^{n-1}_{\geq 0}}(\bm{z})  \, .
\end{align*}

Now, the identities~\eqref{eq:p_q_gamma} are equivalent to the equalities of the corresponding Fourier transforms.
Using the explicit expression for the density of $\bm{U}_n$, the Fourier transform of $q_{\bm{U}_n}^{\gamma}$ can be written as
\[
\begin{split}
\hat{q_{\bm{U}_n}^{\gamma}} (x_1,\ldots, x_{n-1})
&=\int_{\R^{n-1}} q_{\bm{U}_n}^{\gamma}(z_1,\ldots,z_{n-1}) \prod_{k=1}^{n-1}e^{-\i x_k z_k}\diff z_k \\
&=\int_{\R^{n-1}} p_{\bm{U}_n}(\Gamma_{\gamma}(\bm{z}))\1_{\R^{n-1}_{\geq 0}}(\bm{z}) \prod_{k=1}^{n-1}e^{-\i x_k z_k}\diff z_k \\
&=\sum_{s \in \sort_n}
\int_{\R^{n-1}}
\prod_{k=1}^{n-1}  \left(\convolutionop_{j=\finsorted_s(k-1)+1}^{\finsorted_s(k)} 
E_{\deg_s(j)}\right) \left(\Gamma^{-1}_{\pi_s}(\Gamma_{\gamma}(\bm{z}))\right) \\
&\qquad\qquad\qquad\, \times
\frac{\1_{W_{\pi_s}}(\Gamma_{\gamma}(\bm{z}))}{\displaystyle\prod_{j=0}^{N-1} \deg_s(j)} \1_{\R^{n-1}_{\geq 0}}(\bm{z}) \prod_{k=1}^{n-1}e^{-\i x_k z_k}\diff z_k \, .
\end{split}
\]
Observe now that, when $\bm{z} \in \R_{\geq 0}^{n-1}$,
\[
\Gamma_{\gamma}(\bm{z})\in W_{\pi_s}
\quad \Longleftrightarrow \quad
\pi_s = \gamma \, .
\]
Applying the convolution theorem and the fact that the Fourier transform of the exponential density is 
\[
\hat{E_\rho}(x)
:=\int_{\R} E_\rho(u)e^{-\i xu}\diff u
=\frac{\rho}{\rho+\i x} \, ,
\]
we then continue the above computation:
\[
\begin{split}
\hat{q_{\bm{U}_n}^{\gamma}} (x_1,\ldots, x_{n-1})
&=\sum_{s \in \sort_n} \frac{\1_{\{\pi_s=\gamma\}}}{\displaystyle\prod_{j=0}^{N-1} \deg_s(j)} \prod_{k=1}^{n-1}  \int_{\R} \! \left(\convolutionop_{j=\finsorted_s(k-1)+1}^{\finsorted_s(k)} 
E_{\deg_s(j)}\right)\!\left(z_k\right)  \e^{-\i x_k z_k}\diff z_k\\
&= \sum_{s \in \sort_n} \frac{\1_{\{\pi_s=\gamma\}}}{\displaystyle\prod_{j=0}^{N-1} \deg_s(j)} \prod_{k=1}^{n-1}  \prod_{j=\finsorted_s(k-1)+1}^{\finsorted_s(k)} \int_{\R}
E_{\deg_s(j)}\left(z_k\right) \e^{-\i x_kz_k}\diff z_k\\
&= \sum_{s \in \sort_n} \1_{\{\pi_s=\gamma\}} \prod_{k=1}^{n-1}  \prod_{j=\finsorted_s(k-1)+1}^{\finsorted_s(k)} \frac{1}{\deg_s(j)+\i x_k} \, .
\end{split}
\]
Similarly, the expression for the density of $\bm{V}_n$ yields
\[
\begin{split}
\hat{q_{\bm{V}_n}^{\gamma}}(x_1,\ldots,x_{n-1})
&=\int q_{\bm{V}_n}^{\gamma}(z_1,\ldots,z_{n-1})\prod_{k=1}^{n-1}e^{-\i x_k z_k}\diff z_k\\
&=\sum_{t \in \syt(\staircase_n)} \1_{\{\sigma_t=\gamma\}}\;\prod_{k=1}^{n-1}  \prod_{j=\diagsorted_t(k-1)+1}^{\diagsorted_t(k)} \frac{1}{\deg_t(j)+\i x_k} \, .
\end{split}
\]
Replacing each $x_k$ with $-\i x_k$ in the expressions for $\hat{q_{\bm{U}_n}^{\gamma}}$ and $\hat{q_{\bm{V}_n}^{\gamma}}$, we recognize the generating factors $g_s$ and $f_t$ from~\eqref{eq:g_s} and~\eqref{eq:f_t}, respectively.
We thus conclude that the equality $p_{\bm{U}_n}=p_{\bm{V}_n}$ is equivalent to the $(n-1)!$ identities
\[
\sum_{s \in \sort_n} \1_{\{\pi_s=\gamma\}}\;g_s(x_1,\ldots,x_{n-1})=
\sum_{t \in \syt(\staircase_n)} \1_{\{\sigma_t=\gamma\}}\;f_t(x_1,\ldots,x_{n-1})\, ,\qquad \gamma\in S_{n-1} \, .
\]
These can be written more compactly as the equality of the generating functions $F_n$ and $G_n$ defined in \eqref{eq:genfun-F}-\eqref{eq:genfun-G}, that is, the relation \eqref{eq:comb-iden}.

\vskip 4mm

\noindent {\bf Acknowledgments.}
Elia Bisi was supported by ERC Grant \mbox{\emph{IntRanSt}} - 669306.
Fabio Deelan Cunden was supported by ERC Grant \mbox{\emph{IntRanSt}} - 669306 and GNFM-INdAM.
Shane Gibbons was supported by the 2019 Undergraduate Summer Research programme of the School of Mathematics and Statistics, University College Dublin.
Dan Romik was supported by the National Science Foundation grant No.\ DMS-1800725.

\appendix

\section{The RSK and Burge correspondences}
\label{app:RSK_Burge}

In this appendix we translate the results of~\cite{krattenthaler06} into Theorem~\ref{thm:RSK}.

We identify a Young diagram $\lambda$ with the sequence $(m_i,n_i)_{i=1}^{k-1}$ of its border boxes, ordered so that $m_i \geq m_{i-1}$ and $n_i \leq n_{i-1}$ for all $2\leq i\leq k-1$.
Such a sequence forms a directed `line-to-line' path, i.e.\ a directed path starting on the line $\{(i,j)\in\N^2\colon i=1\}$ and ending on the line $\{(i,j)\in\N^2\colon j=1\}$.
In other words, we have that $m_1=1$, $n_{k-1}=1$, and each increment $w_i := (m_i,n_i) -(m_{i-1}, n_{i-1})$ is either $D:=(0,-1)$ or $R:=(1,0)$ for all $2\leq i\leq k-1$\footnote{Letters $D$ and $R$ refer to \emph{down} and \emph{right} steps of the directed path, respectively, if one uses the French notation for Young diagrams as in~\cite{krattenthaler06}.
In the English translation, which we have used throughout this paper, $D$ and $R$ correspond to left and down steps, respectively.}.
Setting also by convention $w_1:=R$ and $w_k:= D$, one can identify $\lambda$ with a `$D$-$R$-sequence' $w=w_1\dots w_k$ starting at $R$ and ending at $D$.
For instance, the shape of the tableaux in Fig.~\ref{fig:RSK-Burge} is encoded as the sequence $RDRRRDDRD$.

Given a partition $\lambda$ associated with a $D$-$R$ sequence $w=w_1\dots w_k$, \cite[Theorem~7]{krattenthaler06} describes the RSK map as a bijection between Young tableaux $x$ of shape $\lambda$ with non-negative integers entries and sequences $(\emptyset = \mu^0, \mu^1,\dots, \mu^k = \emptyset)$ of partitions such that $\mu^i / \mu^{i-1}$ is a horizontal strip if $w_i=R$ and $\mu^{i-1} / \mu^i$ is a horizontal strip if $w_i=D$.
One can easily verify that, for $1\leq i\leq k-1$, the partition $\mu^i$ is of length $p_i := \min(m_i,n_i)$ at most.
We can then form a new Young tableau $r=\{r_{i,j} \colon (i,j)\in \lambda\}$ by setting the diagonal of $r$ that contains the border box $(m_i,n_i)$ to be
\[
(r_{m_i,n_i}, r_{m_i-1,n_i-1}, \dots, r_{m_i-p_i+1, n_i - p_i +1}) := \mu^i \qquad\quad
\text{for $1\leq i\leq k-1$.}
\]
It is then easy to check that the conditions on $\mu^i / \mu^{i-1}$ and $\mu^{i-1} / \mu^i$ are equivalent to the fact that $r$ is an interlacing tableau in the sense of~\eqref{eq:interlacing}.
Therefore, the sequence $(\emptyset = \mu^0, \mu^1,\dots, \mu^k = \emptyset)$ can be rearranged into an interlacing tableau of shape $\lambda$ with non-negative integer entries, thus yielding the RSK correspondence of Theorem~\ref{thm:RSK}.
The fact that~\eqref{eq:RSK_Greene} holds follows then from~\cite[Theorem~8-(G\textsuperscript{1}1)]{krattenthaler06}.

The statement about the Burge correspondence in Theorem~\ref{thm:RSK}, which is called \emph{dual RSK'} (to be read: dual RSK prime) in~\cite{krattenthaler06}, can be recovered in a similar way from the results of that paper.
Given a partition $\lambda$ associated with a $D$-$R$ sequence $w=w_1\dots w_k$, \cite[Theorem~11]{krattenthaler06} presents the Burge correspondence as a bijection between Young tableaux $x$ of shape $\lambda$ with non-negative integer entries and sequences $(\emptyset = \nu^0, \nu^1,\dots, \nu^k = \emptyset)$ of partitions such that $\nu^i / \nu^{i-1}$ is a \emph{vertical} strip if $w_i=R$ and $\nu^{i-1} / \nu^i$ is a \emph{vertical} strip if $w_i=D$.
This time, we define the Young tableau $b=\{b_{i,j} \colon (i,j)\in \lambda\}$ by identifying the diagonal of $b$ that contains $(m_i,n_i)$ with the \emph{conjugate} partition of $\nu^i$:
\[
(b_{m_i,n_i}, b_{m_i-1,n_i-1}, \dots, b_{m_i-p_i+1, n_i - p_i +1}) := (\nu^i)' \qquad\quad
\text{for $1\leq i\leq k-1$.}
\]
This resulting map $x\mapsto b$ satisfies~\eqref{eq:Burge_Greene} thanks to~\cite[Theorem~12-(G\textsuperscript{4}2)]{krattenthaler06}.

When $\lambda$ is a rectangular shape $[1,m] \times [1,n]$, the RSK and Burge correspondences degenerate to the classical ones in the following way.
The sequence of partitions $(\emptyset = \mu^0, \mu^1,\dots, \mu^{m+n} = \emptyset)$ corresponding to an $m\times n$ matrix $x$ via RSK can be split into an ascending and a descending sequence:
\[
\emptyset = \mu^0 \subseteq \mu^1 \subseteq \dots \subseteq \mu^m \supseteq \dots \supseteq \mu^{m+n-1} \supseteq \mu^{m+n} = \emptyset \, .
\]
One can then form two Young tableaux $P$ and $Q$ of common shape $\mu^m$ by setting $Q_{i,j} := k$ if and only if $(i,j)\in \mu^k / \mu^{k-1}$ for all $1\leq k\leq m$ and $P_{i,j} := l$ if and only if $(i,j)\in \mu^{m+n-l} / \mu^{m+n-l+1}$ for all $1\leq l\leq n$.
The constraint on the partitions make the two tableaux $P$ and $Q$ semistandard, and the map $x\mapsto (P,Q)$ corresponds to the classical RSK correspondence.
An analogous connection with the classical Burge correspondence also holds.

\vskip 4mm

\printbibliography

\end{document}